\documentclass[11pt]{article}
\usepackage{soul}
\usepackage{exscale,relsize}
\usepackage{amsmath}
\usepackage{amsfonts}
\usepackage[hidelinks]{hyperref}
\usepackage{amssymb}
\usepackage{calc}
\usepackage{theorem}
\usepackage{pifont}      
\usepackage{array}
\usepackage{color}
\usepackage{graphicx}
\usepackage{comment}
\usepackage[makeroom]{cancel}
 \usepackage{mathpazo}
\usepackage{enumitem}
\usepackage{todonotes}

\oddsidemargin -0.1cm
\textwidth  16.5cm
\topmargin  0.0cm
\headheight 0.0cm
\textheight 21.0cm
\parindent  4mm
\parskip    10pt
\tolerance  3000
\newcommand{\sumop}{\ensuremath{{S}}}
\newcommand{\To}{\ensuremath{\rightrightarrows}}

\newcommand{\scal}[2]{\langle{{#1},{#2}}\rangle}
\newcommand{\Scal}[2]{\left\langle{{#1},{#2}}\right\rangle}

\newcommand{\ST}{\ensuremath{\;|\;}}

\newcommand{\RR}{\ensuremath{\mathbb R}}

\newcommand{\RX}{\ensuremath{\,\left]-\infty,+\infty\right]}}

\newcommand{\NN}{\ensuremath{\mathbb N}}

\newcommand{\menge}[2]{\big\{{#1} \mid {#2}\big\}}

\newcommand{\dom}{\ensuremath{\operatorname{dom}}}

\newcommand{\gra}{\ensuremath{\operatorname{gra}}}

\newcommand{\prox}{\ensuremath{\operatorname{Prox}}}

\newcommand{\ran}{\ensuremath{\operatorname{ran}}}

\newcommand{\spa}{\ensuremath{\operatorname{span}}}

\newcommand{\Id}{\ensuremath{\operatorname{Id}}}

\newcommand{\gr}{\ensuremath{\operatorname{gra}}}

\renewcommand{\iff}{\ensuremath{\Leftrightarrow}}
\renewcommand{\phi}{\ensuremath{\varphi}}

\newtheorem{theorem}{Theorem}[section]
\newtheorem{lemma}[theorem]{Lemma}
\newtheorem{fact}[theorem]{Fact}
\newtheorem{corollary}[theorem]{Corollary}
\newtheorem{proposition}[theorem]{Proposition}
\newtheorem{definition}[theorem]{Definition}

\theoremstyle{plain}{\theorembodyfont{\rmfamily}
}
\theoremstyle{plain}{\theorembodyfont{\rmfamily}
}
\theoremstyle{plain}{\theorembodyfont{\rmfamily}
}
\theoremstyle{plain}{\theorembodyfont{\rmfamily}
\newtheorem{example}[theorem]{Example}}
\theoremstyle{plain}{\theorembodyfont{\rmfamily}
\newtheorem{remark}[theorem]{Remark}}
\theoremstyle{plain}{\theorembodyfont{\rmfamily}
}



\newcommand{\pluss}{{\hskip1pt \raise1pt\vbox{\hrule width6pt \vskip1pt
\hrule width6pt}\kern-4pt{\lower1pt\hbox{\vrule height6pt \kern1pt\vrule
height6pt}}\hskip5pt}}

\newcommand{\argmin}{\mathop{\rm argmin}\limits}

\begin{document}

\title{\textsc
	Multi-marginal maximal monotonicity\\ and convex analysis}

\author{
	Sedi Bartz\thanks{
		Mathematics, University
		of Massachusetts Lowell, MA 01854, USA. E-mail:
		\texttt{sedi\_bartz@uml.edu}.},~~
	Heinz H.\ Bauschke\thanks{
		Mathematics, University
		of British Columbia,
		Kelowna, B.C.\ V1V~1V7, Canada. E-mail:
		\texttt{heinz.bauschke@ubc.ca}.},~~
	Hung M.\ Phan\thanks{
		Mathematics, University
		of Massachusetts Lowell, MA 01854, USA. E-mail:
		\texttt{hung\_phan@uml.edu}.},
	~~and Xianfu Wang\thanks{
		Mathematics, University of British Columbia, Kelowna, B.C.\
		V1V~1V7, Canada.
		E-mail: \texttt{shawn.wang@ubc.ca}.}}

\date{September 12, 2019}

\maketitle

\begin{abstract}
 Monotonicity and convex analysis arise naturally in the framework of
 multi-marginal optimal transport theory. However, a comprehensive
 multi-marginal monotonicity and convex analysis theory is still missing. 
 To this end we study extensions of classical monotone operator theory and
 convex analysis into the multi-marginal setting. We characterize
 multi-marginal $c$-monotonicity in terms of classical monotonicity and
 firmly nonexpansive mappings. We provide Minty type, continuity and
 conjugacy criteria for multi-marginal maximal monotonicity. We extend the
 partition of the identity into a sum of firmly nonexpansive mappings and
 Moreau's decomposition of the quadratic function into envelopes and proximal
 mappings into the multi-marginal settings. We illustrate our discussion with
 examples and provide applications for the determination of multi-marginal
 maximal monotonicity and multi-marginal conjugacy. We also point out several
 open questions.
\end{abstract}
{\small
	\noindent
	{\bfseries 2010 Mathematics Subject Classification:}
	{Primary 47H05, 26B25; Secondary 49N15, 49K30, 52A01, 91B68.}
	
	\noindent {\bfseries Keywords:}
	$c$-convexity, $c$-monotonicity, $c$-splitting set, cyclic monotonicity, Kantorovich duality, maximal monotonicity, Minty Theorem, Moreau envelope, multi-marginal, optimal transport.
}

\section{Introduction}

Our discussion stems from multi-marginal optimal transport theory: Let $(X_1,\mu_1),\ldots,(X_N,\mu_N)$ be
Borel probability spaces. We set $X=X_1\times\cdots\times X_N$ and we
denote by $\Pi(X)$ the set of all Borel probability measures $\pi$ on
$X$ such that the {\em marginals} of $\pi$ are the $\mu_i$'s. Let
$c:X\to\RR$ be a cost function. A cornerstone of multi-marginal
optimal transport theory is Kellerer's~\cite{Kel} generalization
of the Kantorovich duality theorem to the multi-marginal case. Kellerer's duality
theorem asserts that, in a suitable framework,     
\begin{equation}\label{Kellerer}
\min_{\pi\in\Pi(X)} \int_X c(x)d\pi(x)=\max_{\begin{array}{c}
	u_i\in L_1({\mu_i}),\\ 
	\sum_{1\leq i\leq N}u_i\ \leq\ c	
	\end{array}} \ \ \sum_{1\leq i\leq N}\int_{X_i}u_i(x_i)d\mu_i(x_i).
\end{equation}
It follows that  if $\pi$ is a solution of the left-hand side
of~\eqref{Kellerer} and $(u_1,\ldots,u_N)$ is a solution of the
right-hand side of~\eqref{Kellerer}, then $\pi$ is concentrated
on the subset $\Gamma$ of $X$ where the equality $c=\sum_{1\leq
	i\leq N}u_i$ holds. In recent publications (see, for example,~\cite{BBW2, Gri, KP}) such subsets $\Gamma$ of $X$ are referred to as $c$-splitting
sets: Let $N\geq 2$ be a natural number and $I=\{1,\ldots,N\}$ an index set. Let $X_1,\ldots,X_N$ be nonempty sets, $X=X_1\times\cdots\times X_N$ and $c:X\to\RR$ a function. 
\begin{definition}[$c$-splitting set]
\label{splitting def}
	Let $\Gamma\subseteq X$. We say that $\Gamma$ is a $c$-splitting set if for each $i\in I$ there exists a function $u_i:X_i\to\RX$ such that
	\begin{equation}\label{splitting functions definition inequality}
	\forall x=(x_1,\ldots,x_N)\in X,\qquad
	c(x_1,\ldots,x_N)\leq \left(\bigoplus_{i\in I}u_i\right)(x):=\sum_{i\in I} u_i(x_i)
	\end{equation}
	and 
	\begin{equation}\label{splitting functions definition equality}
	\forall x=(x_1,\ldots,x_N)\in \Gamma,\ \ \qquad
	c(x_1,\ldots,x_N)=\left(\bigoplus_{i\in I} u_i\right)(x):=\sum_{i\in I} u_i(x_i).
	\end{equation}
	In this case we say that $(u_1,\ldots,u_N)$ is a $c$-splitting tuple of $\Gamma$. Given functions $u_i:X_i\to\RX$ that satisfy~\eqref{splitting functions definition inequality}, we call the set of all points $(x_1,\ldots,x_N)\in X$ that satisfy \eqref{splitting functions definition equality} the $c$-splitting set generated by the tuple $(u_1,\ldots,u_N)$.
\end{definition}

 In the case $N=2$, splitting sets are natural in convex
analysis as graphs of subdifferentials. Indeed, by the Young-Fenchel
inequality the graph of the subdifferential $\partial f$ is the $c$-splitting
set generated by the pair $(f,f^*)$ where $c=\scal{\cdot}{\cdot}$ is the
classical pairing between a linear space and its dual. Similar to the
two-marginal case, in the multi-marginal case monotonicity arises naturally
as well:

\begin{definition}
[$c$-cyclic monotonicity]
\label{monotonicity definitions}
	The subset $\Gamma$ of $X$ is said to be $c$-cyclically monotone
	of order $n$, $n$-$c$-monotone for short, if for all $n$ tuples
	$(x^1_1,\dots,x_N^1),\dots,(x_1^n,\dots,x_N^n)$ in $\Gamma$ and
	every $N$ permutations $\sigma_1,\dots,\sigma_N$ in $S_n$,
	\begin{equation}\label{cycmondef}
	\sum_{j=1}^nc(x_1^{\sigma_1(j)},\dots,x_N^{\sigma_N(j)})\leq \sum_{j=1}^n c(x_1^{j},\dots,x_N^{j});
	\end{equation}
	$\Gamma$ is said to be $c$-cyclically monotone if it is
	$n$-$c$-monotone for every $n\in\{2,3,\dots\}$; and 
	$\Gamma$ is said to be $c$-monotone if it is $2$-$c$-monotone. Finally, $\Gamma$ is said to be maximally $n$-$c$-monotone if it has no proper $n$-$c$-monotone extension.
\end{definition}

Cyclic monotonicity was first introduced by Rockafellar~\cite{Rockafellar} in the framework of classical convex analysis. During the late 80s and early 90s (see  \cite{Bre, Rochet, Rus}) the concept was generalized to $c$-cyclic monotonicity in order to hold for more general cost functions $c$ in the framework of two-marginal optimal transport theory.  Currently, it lays at the foundations of the theory (see for example \cite{GanMc, San, Vil}) and plays a role also in recent refinements (see, for example, \cite{BR1, BR2}). Extending the role it plays in two-marginal optimal transport theory, in the past two and a half decades multi-marginal $c$-monotonicity and aspects of $c$-convex analysis are becoming an integral part of the fast evolving multi-marginal optimal transport theory as can be seen, for example, in ~\cite{AC, BBW2, BG, Car, CN, GS, GhMa, GhMo, Gri, KP, KS, MPC, MGN, Pas1, Pas2, RU}. An important instance of an extension  from the two-marginal case relating Definition~\ref{splitting def} with Definition~\ref{monotonicity definitions} is the known fact that $c$-splitting
sets are $c$-cyclically monotone (see, for example, \cite{BBW2, Gri, KP, KS}). 

Before attending our convex analytic discussion we remark that in order to make optimal transport compatible with our discussion, one should exchange min for max in the left-hand side of~\eqref{Kellerer}, exchange max for min in the right-hand side of~\eqref{Kellerer} and, finally, exchange the constraint $\sum_i u_i\leq c$ in the right-hand side of~\eqref{Kellerer} with the constraint $c\leq \sum_i u_i$ as we did in Definition~\ref{splitting def} and Definition~\ref{monotonicity definitions}.

In the framework of multi-marginal optimal transport, presumably the most
traditional and well studied cost functions are classical extensions of the
pairing between a linear space and its dual: 
\begin{quote}
	\emph{For the remainder of our
	discussion, for each $1\leq i\leq N$, we assume that
	$X_i=H$ is a real Hilbert space with inner product $\scal{\cdot}{\cdot}$ and induced norm $\|\cdot \|$.
	We let $c:X\to\RR$ be the cost function defined by}
	\begin{equation*}\label{classical c}
	c(x_1,\dots,x_N)=\sum_{1\leq i<j\leq N}\scal{x_i}{x_j}.
	\end{equation*}
\end{quote}
It follows from straightforward computation (see for example~\cite{BBW2})
that a set $\Gamma\subseteq X$ is $n$-$c$-monotone if and only if it is
$n$-$c$-monotone with respect to each of the functions
	\begin{equation*}
(x_1,\dots,x_N)\mapsto\ -\sum_{1\leq i<j\leq N}\tfrac{1}{2}\|x_i-x_j\|^2\ \ \ \ \ \ \ \ \ \ \ \ \text{and}\ \ \ \ \ \ \ \ \ \ \ \ \ \ (x_1,\dots,x_N)\mapsto\ \tfrac{1}{2}\bigg\|\sum_{i=1}^N x_i\bigg\|^2.
	\end{equation*}
Although classical convex analysis and monotonicity are instrumental in
multi-marginal optimal transport, and although several multi-marginal convex
analytic results are already available (as we recall in our more specific
discussion further below), to the best of our knowledge, a comprehensive
multi-marginal monotonicity and convex analysis theory is still lacking. 
To this end, in the present paper we lay additional foundations and provide
several extensions of classical monotone operator theory and convex analysis
into the multi-marginal settings.

The remainder of the paper is organized as follows. In
Section~\ref{s:multi-mar-Minty} we provide a characterization of
multi-marginal $c$-monotonicity in terms of classical monotonicity. We employ
this characterization in order to provide several equivalent criteria,
including a Minty-type criterion, a criterion based on the
partition of the identity into a sum of
firmly nonexpansive mappings, and other criteria for multi-marginal
maximal $c$-monotonicity. In Section~\ref{s:MMMM_via_cont} we provide a
continuity criterion for multi-marginal maximal monotonicity. In
Section~\ref{s:c-split_tuples} we focus on multi-marginal convex analysis. In
particular, we extend Moreau's decompositions and provide criteria for
maximal $c$-monotonicity of $c$-splitting sets, the multi-marginal extensions of
subdifferentials. We show that the same criteria also imply multi-marginal
$c$-conjugacy of $c$-splitting functions. In the case $N=3$ we also provide a
class of $c$-splitting triples for which $c$-conjugacy implies maximal
$c$-monotonicity. Section~\ref{s:ex} contains examples and applications of
our results to the problem of determining maximal $c$-monotonicity of sets
and $c$-conjugacy of $c$-splitting tuples, thus reducing the need of further
challenging computations of multi-marginal $c$-conjugate tuples.
Additionally, we point out several open problems.

In the remainder of this section we collect standard notations and
preliminary facts from classical monotone operator theory and convex analysis
which, largely, follow \cite{BC2017}. Let $A:H \To H$ be a set-valued
mapping. The {\em domain} of $A$ is the set $\dom A=\{x\in H \ST
Ax\neq\varnothing\}$. The \emph{range} of $A$ is the set $\ran A =
A(H)=\bigcup_{x \in H} Ax$, the \emph{graph} of $A$ is the set $\gra A =
\{(x,u)\in H \times H \ST u \in Ax\}$ and the {\em inverse mapping} of
$A$ is the mapping $A^{-1}$ satisfying $x\in A^{-1}u\Leftrightarrow u\in Ax$.
$A$ is said to be \emph{monotone} if
$$
(\forall (x,u) \in \gra A)(\forall (y,v) \in \gra A)\quad \scal{x-y}{u-v} \geq 0.
$$
$A$ is said to be \emph{maximally monotone} if there exists no monotone
operator $B$ such that $\gra A$ is a proper subset of $\gra B$. The
\emph{resolvent} of $A$ is the mapping $J_A=(A+\Id)^{-1}$ where $\Id$ is the
{\em identity mapping}.
The mapping $T:\dom T\subseteq H\to H$ is said to be \emph{firmly nonexpansive} if
$$
(\forall x\in \dom T)(\forall y\in \dom T) \quad \|Tx-Ty\|^2 + \|(\Id-T)x-(\Id-T)y\|^2 \leq \|x-y\|^2,
$$
where $\dom T\subseteq H$. 
The function $f:H\to\RX$ is said to be \emph{proper} if $\dom f:=\{x\in H
\ST f(x)<\infty\}\neq\varnothing$. The \emph{Fenchel conjugate} of the
function $f$ is the function $f^*$ defined by
\begin{equation}
f^*(u)=\sup_{x\in H}\big(\scal{u}{x}-f(x)\big).
\end{equation}
We set $q(\cdot)=\frac{1}{2}\|\cdot\|^2$. The {\em Moreau envelope} of $f$ is
the function defined by the {\em infimal convolution}
\begin{equation}
e_f(s)=(f\square q)(s)=\inf_{x\in H}\big(f(x)+q(s-x)\big).
\end{equation}
The {\em subdifferential} of the proper function $f$ is the mapping $\partial
f:H\rightrightarrows H$ defined by
$$
\partial f(x)=\big\{u\in H\ \big|\  f(x)+\scal{u}{y-x}\leq f(y),\ \ \forall y\in H\big\}.
$$
The \emph{indicator function} of a subset $C$ of $H$ is the function
$\iota_C:H\to\RX$ which vanishes on $C$ and equals $+\infty$ on
$H\smallsetminus C$.

\begin{fact}[Minty's Theorem {\rm\cite[Theorem~21.1]{BC2017}}]
 Let $A: H \rightrightarrows H$ be monotone. Then $A$ is maximally
 monotone if and only if $\ran (\Id + A) = H$.
\end{fact}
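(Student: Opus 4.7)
My plan is to handle the two implications separately, with the difficult one being surjectivity of $\Id + A$ from maximal monotonicity.

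For the direction $\ran(\Id + A) = H \Rightarrow$ maximality, I argue directly. Suppose $(y, v) \in H \times H$ satisfies $\langle x - y, u - v\rangle \geq 0$ for every $(x, u) \in \gra A$. By surjectivity, I can choose $(x, u) \in \gra A$ with $x + u = y + v$, so that $u - v = -(x - y)$. The monotonicity inequality then reduces to $-\|x - y\|^2 \geq 0$, forcing $x = y$ and $u = v$; hence $(y, v) \in \gra A$, and $A$ admits no proper monotone extension.

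For the converse, I fix $z \in H$ and aim to produce $(x, u) \in \gra A$ with $x + u = z$. The plan is to use the Fitzpatrick function
\begin{equation*}
F_A(x, u) := \sup_{(y, v) \in \gra A}\bigl(\langle x, v\rangle + \langle y, u\rangle - \langle y, v\rangle\bigr),
\end{equation*}
a convex lower semicontinuous function satisfying $F_A(x, u) \geq \langle x, u\rangle$ everywhere on $H \times H$, with equality precisely on $\gra A$ when $A$ is maximally monotone---this is exactly where the hypothesis enters. I would then minimize
\begin{equation*}
\phi(x) := F_A(x, z - x) + \tfrac{1}{2}\|x\|^2 + \tfrac{1}{2}\|z - x\|^2
\end{equation*}
over $H$. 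Coercivity of $\phi$ comes from the affine minorant $F_A(x, z-x) \geq \langle x, v_0\rangle + \langle y_0, z - x\rangle - \langle y_0, v_0\rangle$ obtained by fixing any $(y_0, v_0) \in \gra A$; combining with the quadratic terms yields a lower bound of the form $\tfrac{1}{2}\|x + v_0\|^2 + \tfrac{1}{2}\|z - x + y_0\|^2 - C$. Weak compactness and weak lower semicontinuity of $\phi$ then deliver a minimizer $\bar x$. From the Fitzpatrick inequality and completing the square one also has $\phi(x) \geq \tfrac{1}{2}\|z\|^2$ for every $x$, so the crux is to show that the minimum value equals $\tfrac{1}{2}\|z\|^2$: this forces $F_A(\bar x, z - \bar x) = \langle \bar x, z - \bar x\rangle$, hence $(\bar x, z - \bar x) \in \gra A$, giving $z \in \ran(\Id + A)$.

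The main obstacle is precisely the attainment of the lower bound $\tfrac{1}{2}\|z\|^2$. I would exploit the first order optimality condition at $\bar x$, together with convexity of $F_A$ and the Fenchel-Young type structure $F_A \geq \langle\cdot,\cdot\rangle$, to force equality in the Fitzpatrick inequality at $(\bar x, z - \bar x)$. An alternative plan that sidesteps the Fitzpatrick machinery would be to establish surjectivity first in finite dimensions via a Brouwer fixed point argument applied to a truncated resolvent, and then transfer to the Hilbert space setting through Zorn's lemma on finite dimensional restrictions combined with a weak compactness diagonal extraction. Either route ultimately converts the abstract maximality of $A$ into a concrete surjectivity statement, which is where the full strength of maximal monotonicity is needed.
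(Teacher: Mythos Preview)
The paper does not prove this statement: it is recorded as a Fact with a citation to \cite[Theorem~21.1]{BC2017} and no argument is given. So there is no ``paper's proof'' to compare against; your proposal stands on its own.

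Your argument for the direction $\ran(\Id+A)=H\Rightarrow$ maximality is complete and correct. For the converse, the Fitzpatrick-function route you sketch is a legitimate and well-known strategy, and your computation that $\phi(x)\geq\tfrac{1}{2}\|z\|^2$ is correct. However, you explicitly flag ``the main obstacle'' --- showing that the minimum of $\phi$ is actually $\tfrac{1}{2}\|z\|^2$ --- and do not resolve it: saying you ``would exploit the first order optimality condition'' is not yet a proof. This step is genuinely the heart of the matter and requires a real argument (for instance, a Fenchel duality computation on $F_A$, or the observation that the conjugate $F_A^*$ is also a representative function for $A$, which lets one close the gap). Your alternative Brouwer/Zorn outline is likewise only a plan. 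As written, the hard direction is an outline with the key step missing, not a proof.
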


\begin{fact} {\rm (\cite[Proposition~23.8]{BC2017})}\label{f:firm_vs_mono}
	Let $A:H \rightrightarrows H$. Then 
	\begin{enumerate}
		\item\label{f:firm_vs_mono-i} $J_A$ is firmly nonexpansive if and only if $A$ is monotone;
		\item\label{f:firm_vs_mono-ii} $J_A$ is firmly nonexpansive and $\dom J_A=H$ if and only if $A$ is maximally monotone.
	\end{enumerate}
\end{fact}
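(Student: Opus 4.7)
The plan is to prove the equivalence in (i) by a direct algebraic manipulation connecting the resolvent identity $u = J_A x \iff x-u \in Au$ with the firm nonexpansiveness inequality, and then to deduce (ii) by combining (i) with Minty's Theorem. A preliminary issue in both directions of (i) is that $J_A$, being defined as an inverse relation, could a priori be multi-valued; so along the way I need to verify that monotonicity (or firm nonexpansiveness) forces single-valuedness of $J_A$ on $\dom J_A$.

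For the ($\Leftarrow$) direction of (i), I would assume $A$ is monotone. If $u,v \in J_A x$, then $(u,x-u),(v,x-v)\in\gra A$, and monotonicity yields $\scal{u-v}{(x-u)-(x-v)}=-\|u-v\|^2\geq 0$, so $u=v$. Thus $J_A$ is single-valued. Next, for $x,y\in\dom J_A$ with $u=J_A x$, $v=J_A y$, monotonicity applied to $(u,x-u),(v,y-v)\in\gra A$ gives
\begin{equation*}
\scal{u-v}{(x-u)-(y-v)}\geq 0,
\end{equation*}
which rearranges, via the parallelogram-type identity, to $\|u-v\|^2+\|(x-u)-(y-v)\|^2\leq\|x-y\|^2$, i.e.\ firm nonexpansiveness of $J_A$.

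For the ($\Rightarrow$) direction of (i), assume $J_A$ is firmly nonexpansive. Given $(u,u^*),(v,v^*)\in\gra A$, set $x=u+u^*$ and $y=v+v^*$, so that $u\in J_A x$ and $v\in J_A y$; firm nonexpansiveness (which again forces single-valuedness of $J_A$) applied to this pair gives
\begin{equation*}
\|u-v\|^2+\|u^*-v^*\|^2\ \leq\ \|(u-v)+(u^*-v^*)\|^2.
\end{equation*}
Expanding the right-hand side and cancelling the squared norms leaves $2\scal{u-v}{u^*-v^*}\geq 0$, which is monotonicity of $A$.

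Finally for (ii), note that $\dom J_A=\ran(\Id+A)$ directly from the definition $J_A=(\Id+A)^{-1}$. By (i), firm nonexpansiveness of $J_A$ is equivalent to monotonicity of $A$, while $\dom J_A=H$ is exactly the range condition in Minty's Theorem; combining the two yields maximal monotonicity of $A$ iff $J_A$ is firmly nonexpansive with full domain. The main obstacle, such as it is, is the bookkeeping around single-valuedness of $J_A$ in the direction from monotonicity to firm nonexpansiveness; once that is dispatched, everything is a one-line identity plus a citation of Minty.
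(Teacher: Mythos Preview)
The paper does not prove this statement at all --- it is recorded as a Fact with a citation to \cite[Proposition~23.8]{BC2017} and used as a black box. Your argument is correct and is essentially the standard proof: the key identity $\scal{u-v}{(x-u)-(y-v)}\geq 0 \iff \|u-v\|^2+\|(x-u)-(y-v)\|^2\leq\|x-y\|^2$ (since both sides differ by $2\scal{u-v}{(x-y)-(u-v)}$) gives (i), and (ii) follows from (i) together with Minty's Theorem via $\dom J_A=\ran(\Id+A)$. One minor remark: in the ($\Rightarrow$) direction of (i), single-valuedness of $J_A$ is already built into the paper's definition of firm nonexpansiveness (which is stated for maps $T:\dom T\to H$), so your parenthetical about forcing single-valuedness is not needed there.
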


Let $f$ be a proper lower semicontinuous convex function. 
The proximity operator \cite[Definition~12.23]{BC2017} of $f$ is defined by
\begin{equation}
\prox_f:H\to H:
x\mapsto \prox_f x=\argmin_{y\in H}\big(f(y)+q(y-x)\big).
\end{equation}
For all $s\in H$, \cite[Proposition~12.15]{BC2017} implies that there is a unique minimizer of $f(\cdot)+q(s-\cdot)$ over all $x\in H$; thus, the proximity operator of $f$ is well defined. Furthermore, we also have $\prox_f=J_{\partial f}$.

Additional properties of the Moreau envelope are:
\begin{fact}[Moreau envelope] 
Let $f$ be a proper lower semicontinuous convex function. The following
 assertions hold:
\begin{enumerate}
\item (Moreau decomposition) $e_{f}+e_{f^*}=q$.
\item $x=\prox_f s$\quad $\iff$\quad $e_{f}(s)=f(x)+q(s-x)$.
\item {\rm(\cite[Proposition~12.30]{BC2017})} $e_f$ is Fr\'echet differentiable with $\nabla e_f=\Id-\prox_f$.
\end{enumerate}
\end{fact}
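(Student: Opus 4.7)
The plan is to handle the three assertions in turn, using standard tools of convex analysis: the duality between addition and infimal convolution under Fenchel conjugation, the defining property of the proximity operator, and the subdifferential calculus for the Moreau envelope. For (i), I would start by algebraically unfolding the envelope:
\[
e_f(s)=\inf_{x\in H}\bigl(f(x)+q(s-x)\bigr)=q(s)-\sup_{x\in H}\bigl(\scal{s}{x}-f(x)-q(x)\bigr)=q(s)-(f+q)^*(s).
\]
Since $q$ is convex, continuous, and of full domain, the infimal convolution--conjugate duality applies without further qualification and gives $(f+q)^*=f^*\square q^*=f^*\square q=e_{f^*}$; rearranging yields $e_f+e_{f^*}=q$.

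For (ii), the statement reduces to the definitional property that $\prox_f s$ is \emph{the unique} minimizer of $y\mapsto f(y)+q(y-s)$. In the forward direction, if $x=\prox_f s$ then substituting into the infimum defining $e_f(s)$ yields $e_f(s)=f(x)+q(s-x)$. Conversely, if this equality holds then $x$ is a minimizer of the same functional, and the strict convexity injected by the coercive quadratic term forces $x=\prox_f s$.

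For (iii), the Fr\'echet differentiability and the gradient formula are quoted directly from \cite[Proposition~12.30]{BC2017}, as already noted in the statement. A self-contained derivation would differentiate the Moreau decomposition $e_f+e_{f^*}=q$ combined with the subdifferential calculus identity $\partial e_f=\Id-\prox_f$, and use that $\prox_f=J_{\partial f}$ is firmly nonexpansive with full domain (Fact~\ref{f:firm_vs_mono}\ref{f:firm_vs_mono-ii}); the continuity and single-valuedness of $\Id-\prox_f$ then upgrade subdifferentiability to Fr\'echet differentiability.

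I expect the only real obstacle to lie in the careful invocation of the infimal-convolution conjugacy rule used in (i): in general the identity $(f+q)^*=f^*\square q$ with attained infimum requires a qualification condition, but here the full-domain continuity of $q$ makes this automatic, so the rest of the argument is essentially bookkeeping.
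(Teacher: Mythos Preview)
Your argument is correct and entirely standard. Note, however, that the paper does not prove this statement at all: it is recorded as a \emph{Fact} with references to \cite{BC2017} (in particular Remark~14.4 and Proposition~12.30 there) and used as background. So there is no proof in the paper to compare your proposal against; your sketch simply supplies the routine verification that the paper elects to omit.
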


Finally, we set the marginal projections $P_i\colon X\to X_i\colon (x_1,\ldots,x_N)\mapsto x_i$ for $i$ in $\{1,\ldots, N\}$ and the two-marginal projections $P_{i,j}\colon X\to X_i\times X_j\colon (x_1,\ldots,x_N)\mapsto
(x_i,x_j)$ for $i<j$ in $\{1,\ldots,N\}$. 
Given a subset $\Gamma$ of $X$, 
we set 
\begin{equation}\label{gammai}
\Gamma_i= P_i(\Gamma)
\qquad\text{and}\qquad
\Gamma_{i,j}=P_{i,j}(\Gamma)
\end{equation}
We also define $A_{i,j}:X_i\rightrightarrows X_j$ via
$$
\gra A_{i,j}=\Gamma_{i,j}.
$$
The notation $A_i$ is reserved for a different purpose and introduced 
in Section~\ref{s:multi-mar-Minty}.

\section{A characterization of multi-marginal $c$-monotonicity and Minty type criteria for  $c$-monotonicity}\label{s:multi-mar-Minty}

Let $\sumop :H\times H\to H$ be the mapping defined by $\sumop(x,y)=x+y$. For
any mapping $A:H\rightrightarrows H$, we have the identity
\cite[Lemma~12.14]{Rock-Wets}
\begin{equation}
J_{A^{-1}}=\Id-J_A.
\end{equation}
If, in addition, $A$ is monotone, then by Fact~\ref{f:firm_vs_mono}, $J_A$ and $J_{A^{-1}}$ are single-valued, thus,
\begin{equation}\label{e:JAJA-1}
J_{A}+J_{A^{-1}}=\Id|_{\sumop(\gra A)},
\end{equation}
which is equivalent to $\gra A$ being parameterized by
\begin{equation}\label{e:graA_para}
\gra A=\menge{(J_As,J_{A^{-1}}s)}{s\in \sumop(\gra A)}.
\end{equation} 

Given a set $\Gamma\subseteq X$, we now associate with $\Gamma$ monotone
mappings as follows. 
\begin{definition}
	Let $\Gamma\subseteq X$ be a set. For each index set $\varnothing \neq K \subsetneq I$, we define the mapping $A_K:H\rightrightarrows H$ by 
	\begin{equation}\label{e:monotone_condition}
	\gra A_K=\bigg\{\Big(\sum_{i\in K}x_i,\sum_{i\in I\setminus K}x_i\Big)\ \Big|\ (x_1,\ldots,x_N)\in \Gamma\ \bigg\}
	\end{equation}
	and for each $i\in I$ we set $A_i=A_{\{i\}}$.
\end{definition}

Our first aim is to characterize the $c$-monotonicity of a set $\Gamma$ in terms of the monotonicity of its $A_K$'s, and furthermore, extend \eqref{e:JAJA-1} and \eqref{e:graA_para} to the multi-marginal settings. To this end we will employ the sum mapping
\begin{equation}\label{e:sum_func}
\sumop:X\to H:(x_1,\ldots,x_N)\mapsto \sum_{i\in I}x_i,
\end{equation}
and the following fact which follows by a straightforward computation (see,
e.g.,~\cite[Fact 3.3]{BBW2}).

\begin{fact}\label{mono under shift}
	Let $x\in X$. If the subset $\Gamma$ of $X$ is $n$-$c$-cyclically monotone,
	then so is $\Gamma+x$.
\end{fact}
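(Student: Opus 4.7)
The plan is to reduce the $n$-$c$-monotonicity inequality for $\Gamma+x$ directly to the corresponding one for $\Gamma$ by a bilinearity expansion of $c(x_1,\ldots,x_N)=\sum_{1\le i<j\le N}\langle x_i,x_j\rangle$. Write $x=(y_1,\ldots,y_N)\in X$, pick $n$ tuples $(x_1^j+y_1,\ldots,x_N^j+y_N)\in\Gamma+x$ with $(x_1^j,\ldots,x_N^j)\in\Gamma$ for $j=1,\ldots,n$, and fix permutations $\sigma_1,\ldots,\sigma_N\in S_n$. The task is to verify the defining inequality~\eqref{cycmondef} for these shifted tuples.

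First I would expand each cost evaluation by bilinearity:
$$c(x_1^j+y_1,\ldots,x_N^j+y_N)=c(x_1^j,\ldots,x_N^j)+\sum_{i<k}\bigl(\langle x_i^j,y_k\rangle+\langle y_i,x_k^j\rangle\bigr)+\sum_{i<k}\langle y_i,y_k\rangle,$$
and analogously for the permuted tuple, with $x_i^j$ replaced by $x_i^{\sigma_i(j)}$. Upon summing over $j=1,\ldots,n$, the constant term $\sum_{i<k}\langle y_i,y_k\rangle$ contributes $n\sum_{i<k}\langle y_i,y_k\rangle$ to both the permuted and the unpermuted sum, hence drops out when the two are compared.

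The crux is the cancellation of the mixed cross terms. Since each $\sigma_i$ is a bijection of $\{1,\ldots,n\}$, the partial sums along any coordinate are permutation invariant, $\sum_{j=1}^n x_i^{\sigma_i(j)}=\sum_{j=1}^n x_i^j$, so
$$\sum_{j=1}^n\langle x_i^{\sigma_i(j)},y_k\rangle=\Big\langle\sum_{j=1}^n x_i^j,y_k\Big\rangle=\sum_{j=1}^n\langle x_i^j,y_k\rangle,$$
and symmetrically for the other mixed sum. Consequently, the difference between the permuted and the unpermuted $j$-sums of $c$ on the shifted tuples coincides with the same difference computed on $\Gamma$, which is $\le 0$ by the $n$-$c$-monotonicity hypothesis.

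There is no real obstacle: the proof is essentially bilinearity together with reindexing of finite sums. The only subtlety worth flagging is that the cancellation of the cross terms relies on each $\sigma_i$ being a permutation of the \emph{full} index set $\{1,\ldots,n\}$, which is exactly what Definition~\ref{monotonicity definitions} provides. The same template would carry over verbatim to any cost that is an affine perturbation of a finite sum of bilinear pairings.
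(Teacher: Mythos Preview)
Your proof is correct and is precisely the ``straightforward computation'' the paper alludes to (the paper does not spell out a proof but cites \cite[Fact~3.3]{BBW2}). The bilinearity expansion together with the reindexing $\sum_{j} x_i^{\sigma_i(j)}=\sum_j x_i^j$ is exactly the intended argument.
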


\begin{lemma}\label{l:c-mono_iff_mono}
	Let $\Gamma\subseteq X$ be a set. Then the following assertions are equivalent:
	\begin{enumerate}
		\item\label{l:c-mono_iff_mono-i} $\Gamma$ is $c$-monotone;
		\item\label{l:c-mono_iff_mono-ii} For each $\varnothing \neq K \subsetneq I$, the mapping $A_K$ is monotone;
		\item\label{l:c-mono_iff_mono-iii} For each $\varnothing \neq K \subsetneq I$, the mapping $J_{A_K}:\sumop(\Gamma)\to H$ is firmly nonexpansive.
	\end{enumerate}
	In this case,
	\begin{equation}\label{e:sumJAi=Id}
		J_{A_1}+\cdots + J_{A_N}=\Id|_{\sumop(\Gamma)},
	\end{equation}
	equivalently, $\Gamma$ can be parameterized by
	\begin{equation}\label{e:Gamma_para}
	\Gamma = \menge{(J_{A_1}s,\ldots,J_{A_N}s)}{s\in \sumop(\Gamma)};
	\end{equation}
	and, furthermore, for each $\varnothing \neq K \subsetneq I$, 
	\begin{equation}\label{e:JAK=sumJAi}
	J_{A_K}=\sum_{i\in K}J_{A_i}.
	\end{equation}
\end{lemma}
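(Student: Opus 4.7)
The plan is to prove (i)$\Leftrightarrow$(ii) by expanding the two-point $c$-monotonicity inequality and recognizing the outcome as monotonicity of each $A_K$, obtain (ii)$\Leftrightarrow$(iii) from Fact~\ref{f:firm_vs_mono}\ref{f:firm_vs_mono-i}, and establish the parameterization and additive formulas from single-valuedness of the resolvents $J_{A_i}$.

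For (i)$\Leftrightarrow$(ii), I would fix two tuples $x,y\in\Gamma$ and observe that a choice of permutations $\sigma_1,\ldots,\sigma_N\in S_2$ in \eqref{cycmondef} amounts to specifying the subset $K\subseteq I$ of indices that are \emph{not} swapped; the cases $K=\emptyset$ and $K=I$ produce trivial equality, leaving only $\emptyset\neq K\subsetneq I$. Splitting the cost
$$c(z)=\sum_{\substack{i<j\\ i,j\in K}}\scal{z_i}{z_j}+\sum_{\substack{i<j\\ i,j\in I\setminus K}}\scal{z_i}{z_j}+\Scal{\sum_{i\in K}z_i}{\sum_{j\in I\setminus K}z_j}$$
into ``pure-$K$'', ``pure-$(I\setminus K)$'', and ``cross'' contributions, the two pure pieces merely exchange between the two tuples on the left-hand side of \eqref{cycmondef} and cancel against the right-hand side, while the cross pieces reduce the whole inequality to $\scal{a-a'}{b-b'}\geq 0$, where $(a,b)$ and $(a',b')$ are the images of $x$ and $y$ in $\gra A_K$ via \eqref{e:monotone_condition}. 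Quantifying over all $x,y\in\Gamma$ and all admissible $K$ exhibits (i)$\Leftrightarrow$(ii). Equivalence with (iii) then follows from Fact~\ref{f:firm_vs_mono}\ref{f:firm_vs_mono-i} applied to each $A_K$, noting that $\dom J_{A_K}=\ran(\Id+A_K)=\sumop(\gra A_K)=\sumop(\Gamma)$ by \eqref{e:monotone_condition}.

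For the closing formulas, I would invoke (ii) together with Fact~\ref{f:firm_vs_mono}\ref{f:firm_vs_mono-i} to conclude that $J_{A_i}$ and $J_{A_K}$ are single-valued on $\sumop(\Gamma)$. Single-valuedness of each $J_{A_i}$ forces $\sumop|_\Gamma$ to be injective: if $x,y\in\Gamma$ share the same sum $s$, then $(x_i,s-x_i),(y_i,s-y_i)\in\gra A_i$ yield $x_i=y_i$ for every $i$. Hence each $s\in\sumop(\Gamma)$ determines a unique preimage $x\in\Gamma$, from which the identities $J_{A_i}(s)=x_i$ and $J_{A_K}(s)=\sum_{i\in K}x_i$ follow directly. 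These identities are exactly \eqref{e:JAK=sumJAi}, summing them over $i\in I$ yields \eqref{e:sumJAi=Id}, and the parameterization \eqref{e:Gamma_para} is then a restatement of ``$x$ is determined by $s$ via the resolvents $J_{A_i}$''.

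The main obstacle is the bookkeeping in the cost-splitting step: one must track carefully which index pairs contribute to the pure versus cross parts, and verify that exactly the cross-index contributions survive the swap and encode the monotonicity of $A_K$. Everything else reduces to standard single-valuedness and domain considerations for resolvents of monotone operators.
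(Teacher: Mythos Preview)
Your proposal is correct and follows essentially the same approach as the paper: the equivalence \ref{l:c-mono_iff_mono-i}$\Leftrightarrow$\ref{l:c-mono_iff_mono-ii} is obtained by the same cost-splitting into pure-$K$, pure-$(I\setminus K)$, and cross terms (the paper first normalizes via Fact~\ref{mono under shift} to the pair $\{z,0\}$ with $z=x-y$, whereas you compute directly with $x,y$, but the algebra is identical), \ref{l:c-mono_iff_mono-ii}$\Leftrightarrow$\ref{l:c-mono_iff_mono-iii} is handled the same way via Fact~\ref{f:firm_vs_mono}\ref{f:firm_vs_mono-i}, and your derivation of the closing formulas via injectivity of $\sumop|_\Gamma$ spells out what the paper leaves as ``follow from \ref{l:c-mono_iff_mono-iii} and the definition of $A_K$''.
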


\begin{proof}
 \ref{l:c-mono_iff_mono-i} $\iff$ \ref{l:c-mono_iff_mono-ii}:
 First we characterize the $c$-monotone relations of the set $\{z,0\}$ in $X$. We employ a similar computation to the one in~\cite[Lemma 4.1]{BBW2}: For $z=(z_1,\ldots,z_N)\in X$ and $\varnothing \neq K \subsetneq I$ we set $z^K=(z^K_1,\ldots,z^K_N)\in X$ by 
	\begin{equation*}
	z^K_i=\begin{cases}
	z_i ,& i\in K;\\
	0 ,& i\in I\setminus K.
	\end{cases}
	\end{equation*}
	From Definition~\ref{monotonicity definitions} it follows that $\{z,0\}$ is $c$-monotone if and only if for each  $\varnothing\neq K\subsetneq I$
	\begin{align*}
	0&\leq c(z)+c(0)-c(z^K)-c(z^{I\setminus K})\\[1mm]
	&=\sum_{i,j\in I,\ i<j}\scal{z_i}{z_j}+0-\sum_{i,j\in I,\ i<j}\scal{z^K_i}{z^K_j}-\sum_{i,j\in I,\ i<j}\scal{z^{I\setminus K}_i}{z^{I\setminus K}_j}\\[1mm]
	&=\sum_{i,j\in I,\ i<j}\scal{z_i}{z_j}-\sum_{i,j\in K,\ i<j}\scal{z_i}{z_j}-\sum_{i,j\in I\setminus K,\ i<j}\scal{z_i}{z_j}=\bigg\langle\sum_{i\in K}z_i,\sum_{i\in I\setminus K}z_i\bigg\rangle.
	\end{align*}
	
	In general, from Definition~\ref{monotonicity definitions} it follows that the set $\Gamma\subseteq X$ is $c$-monotone if and only if for any $x\in\Gamma$ and $y\in\Gamma$, the set $\{x,y\}$ is $c$-monotone, which, in turn, by invoking Fact~\ref{mono under shift}, is equivalent to the set $\{x-y,0\}$ being $c$-monotone. Summing up, we see that $
	\Gamma$ is $c$-monotone if and only if for any $x=(x_1,\ldots,x_N),\ y=(y_1,\ldots,y_N)\in\Gamma$ and any $\varnothing \neq K \subsetneq I$, by letting $z=x-y$, 
	$$
	0\leq\bigg\langle\sum_{i\in K}x_i-\sum_{i\in K}y_i,\sum_{i\in I\setminus K}x_i-\sum_{i\in I\setminus K}y_i\bigg\rangle,
	$$
	i.e., $A_K$ is monotone.
	
 \ref{l:c-mono_iff_mono-ii} $\iff$ \ref{l:c-mono_iff_mono-iii}: By the
 definition of $A_K$, it follows that $\dom J_{A_K}=\sumop(\Gamma)$. Thus, the
 equivalence \ref{l:c-mono_iff_mono-ii} $\iff$ \ref{l:c-mono_iff_mono-iii}
 follows immediately from Fact~\ref{f:firm_vs_mono}\ref{f:firm_vs_mono-i}.
	
	Finally, \eqref{e:sumJAi=Id}, \eqref{e:Gamma_para} and \eqref{e:JAK=sumJAi} follow from \ref{l:c-mono_iff_mono-iii} and the definition of $A_K$.
\end{proof}

We now address maximal $c$-monotonicity. Equivalent statements of Minty's
characterization are: Let $A:H\rightrightarrows H$ be a monotone mapping.
Then $A$ is maximally monotone if and only if
$$
\sumop\big(\gra(A)\big)=H,
$$
equivalently,
$$
\gra(A)+\gra(-\Id)=H\times H.
$$ 

In order to extend our discussion of these formulas into the multi-marginal settings we will employ the following definitions and notations. We denote by $\Delta$ the subset of $X=X_1\times\cdots\times X_N$ defined by
\begin{equation}\label{e:Delta}
\Delta=\big\{(x,\ldots,x)\big|\ x\in H\big\}.\ \ \ \ \ \ \ \text{Consequently,}\ \ \ \ \ \Delta^{\perp}=\Big\{(x_1,\ldots,x_N)\in X\ \Big|\ \sum_{i=1}^N x_i=0\ \Big\}.
\end{equation}

\begin{corollary}\label{c:unique_mono_decomp}
Let $\Gamma\subseteq X$ be a $c$-monotone set. Then for every $u,v\in\Gamma$,
		\begin{equation}
		u-v\in \Delta^\perp\qquad \iff\qquad u=v.
		\end{equation}
\end{corollary}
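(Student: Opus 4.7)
The plan is to exploit the parameterization of $\Gamma$ furnished by Lemma~\ref{l:c-mono_iff_mono}. The ``if'' direction is immediate because $u=v$ gives $u-v=0\in\Delta^\perp$, so all the work lies in proving the ``only if'' direction.

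First I would unpack the hypothesis $u-v\in\Delta^\perp$. By the definition of $\Delta^\perp$ in~\eqref{e:Delta}, writing $u=(u_1,\ldots,u_N)$ and $v=(v_1,\ldots,v_N)$, this says exactly that
\begin{equation*}
\sumop(u)\ =\ \sum_{i\in I}u_i\ =\ \sum_{i\in I}v_i\ =\ \sumop(v).
\end{equation*}
Set $s:=\sumop(u)=\sumop(v)\in \sumop(\Gamma)$.

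Next I would invoke the parameterization~\eqref{e:Gamma_para} obtained in Lemma~\ref{l:c-mono_iff_mono} from the $c$-monotonicity of $\Gamma$. Since $u,v\in\Gamma$ and $\sumop(u)=\sumop(v)=s$, the parameterization yields $u_i=J_{A_i}s=v_i$ for every $i\in I$ (here it is crucial that each $J_{A_i}$ is single-valued, which is guaranteed by Lemma~\ref{l:c-mono_iff_mono}\ref{l:c-mono_iff_mono-iii} via firm nonexpansiveness). Hence $u=v$, completing the argument.

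The main (and really only) point requiring care is ensuring that the representatives of $u$ and $v$ in the parameterization~\eqref{e:Gamma_para} are \emph{the same} element $s\in\sumop(\Gamma)$, i.e., that the parameterization is injective; but this is built into~\eqref{e:sumJAi=Id}, since if $u$ is produced by $s$ via the parameterization then necessarily $\sumop(u)=\sum_{i\in I}J_{A_i}s = s$. Thus no further computation is needed beyond reading off Lemma~\ref{l:c-mono_iff_mono}.
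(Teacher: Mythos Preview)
Your proof is correct and uses essentially the same ingredient as the paper---Lemma~\ref{l:c-mono_iff_mono} applied to each $A_i$---only packaged via the parameterization~\eqref{e:Gamma_para} and the single-valuedness of $J_{A_i}$, whereas the paper writes out the underlying monotonicity inequality $0\le\langle d_{i_0},-d_{i_0}\rangle=-\|d_{i_0}\|^2$ directly. The two arguments are equivalent: the paper's one-line computation is precisely what shows $J_{A_{i_0}}$ is single-valued at $s=\sumop(u)=\sumop(v)$.
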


\begin{proof}
Let $u=(u_1,\ldots,u_N)$ and $v=(v_1,\ldots,v_N)$ belong to $\Gamma$ and suppose that 
$$
u-v=d=(d_1,\ldots,d_N)\in\Delta^\perp.
$$
 We prove that $d_i=0$ for each $1\leq i\leq N$. To this end, set $1\leq
 i_0\leq N$. By Lemma~\ref{l:c-mono_iff_mono}, $A_{i_0}$ is monotone.
 Consequently we see that
	 \begin{equation*}
	 0\leq \bigg\langle u_{i_0}-v_{i_0},\sum_{i\neq i_0}u_i-\sum_{i\neq i_0}v_i\bigg\rangle =\bigg\langle d_{i_0},\sum_{i\neq i_0}d_i\bigg\rangle=\scal{d_{i_0}}{-d_{i_0}}=-\|d_{i_0}\|^2\leq 0.
	 \end{equation*}	 
\end{proof}	

Combining Lemma~\ref{l:c-mono_iff_mono} and
Corollary~\ref{c:unique_mono_decomp} with classical two-marginal monotone
operator theory, we arrive at the following result.

\begin{theorem}[multi-marginal maximal $c$-monotonicity]
\label{t:MMMM}
	Let $\Gamma\subseteq X$ be a $c$-monotone set. Then the following assertions are equivalent:
	\begin{enumerate}
		\item\label{t:MMMM-i} For each $\varnothing \neq K \subsetneq I$ the mapping $A_K$ defined by \eqref{e:monotone_condition} is maximally monotone;
		
		\item\label{t:MMMM-ii} There exists $\varnothing \neq K \subsetneq I$ such that the mapping $A_K$ is maximally monotone; 
		
		\item\label{t:MMMM-iii} $\Gamma+\Delta^\perp=X$;
		
		\item\label{t:MMMM-iv} $J_{A_{1}}+\cdots+J_{A_{N}}=\Id$;
			
		\item\label{t:MMMM-v} For each $\varnothing \neq K \subsetneq I$ the firmly nonexpansive mapping $J_{A_K}:H\to H$ has full domain and $J_{A_K}=\sum_{i\in K}J_{A_i}$;
			
		\item\label{t:MMMM-vi} $\sumop(\Gamma)=H$.
	\end{enumerate} 
	In this case, $\Gamma$ is maximally $c$-monotone.
\end{theorem}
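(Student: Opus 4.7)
The plan is to establish the equivalences via the cycle (i)~$\Rightarrow$~(ii)~$\Rightarrow$~(vi), together with the side equivalence (vi)~$\iff$~(iii), followed by (vi)~$\Rightarrow$~(iv)~$\Rightarrow$~(v)~$\Rightarrow$~(i), and then to deduce the final maximality claim for $\Gamma$ separately. The implication (i)~$\Rightarrow$~(ii) is immediate, and (ii)~$\Rightarrow$~(vi) is the heart of the argument: by Minty's theorem, maximal monotonicity of a single $A_K$ forces $\ran(\Id+A_K)=H$, and unpacking \eqref{e:monotone_condition} shows that, regardless of which $K$ is chosen,
\[
\ran(\Id+A_K)=\Big\{\textstyle\sum_{i\in K}x_i+\sum_{i\in I\setminus K}x_i\,:\,(x_1,\ldots,x_N)\in\Gamma\Big\}=\sumop(\Gamma),
\]
yielding $\sumop(\Gamma)=H$. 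The equivalence (vi)~$\iff$~(iii) is elementary: since $\sumop$ vanishes on $\Delta^\perp$, we have $\sumop(\Gamma+\Delta^\perp)=\sumop(\Gamma)$, while conversely any $z\in X$ decomposes as $z=u+(z-u)$ with $u\in\Gamma$ chosen so that $\sumop(u)=\sumop(z)$, in which case $z-u\in\Delta^\perp$.

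The implication (vi)~$\Rightarrow$~(iv) follows directly from \eqref{e:sumJAi=Id}, which on $\sumop(\Gamma)=H$ becomes a global identity. The implication (iv)~$\Rightarrow$~(v) follows from \eqref{e:JAK=sumJAi}, which propagates the full domain of each $J_{A_i}$ to every $J_{A_K}$. Finally (v)~$\Rightarrow$~(i) is a direct application of Fact~\ref{f:firm_vs_mono}\ref{f:firm_vs_mono-ii}: a firmly nonexpansive mapping with full domain is exactly the resolvent of a maximally monotone operator.

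For the concluding maximality assertion, I would take an arbitrary $c$-monotone extension $\Gamma'\supseteq\Gamma$. Applying Lemma~\ref{l:c-mono_iff_mono} to $\Gamma'$, the associated mapping $A_i'$ is monotone and contains $A_i$; the maximal monotonicity of $A_i$ granted by~(i) then forces $A_i'=A_i$ for every $i\in I$. Since $\sumop(\Gamma')\supseteq\sumop(\Gamma)=H$, applying the parameterization \eqref{e:Gamma_para} to $\Gamma'$ yields
\[
\Gamma'=\{(J_{A_1}s,\ldots,J_{A_N}s)\,:\,s\in H\}=\Gamma.
\]
The main subtle point I anticipate is recognizing that the Minty range $\ran(\Id+A_K)=\sumop(\Gamma)$ is \emph{independent} of $K$: this common value is precisely what allows the Minty-type hypothesis for a single~$K$ in~(ii) to propagate to maximal monotonicity of every $A_K$ in~(i), and it is the mechanism that makes the $N$ potentially distinct operators $A_1,\ldots,A_N$ either all maximal or none maximal at once.
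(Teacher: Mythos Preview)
Your proof is correct and follows essentially the same route as the paper. Your key observation that $\ran(\Id+A_K)=\sumop(\Gamma)$ regardless of $K$ is precisely the content of the paper's chain \ref{t:MMMM-ii}$\Rightarrow$\ref{t:MMMM-iii}$\Rightarrow$\ref{t:MMMM-iv}, stated more compactly; the paper passes through \ref{t:MMMM-iii} first and reaches \ref{t:MMMM-vi} only afterward, but the underlying Minty mechanism is identical, and the remaining implications invoke Lemma~\ref{l:c-mono_iff_mono} and Fact~\ref{f:firm_vs_mono} exactly as you do.

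The one genuine difference is the final maximality argument. The paper takes a point $u$ that is $c$-monotonically related to $\Gamma$, uses \ref{t:MMMM-iii} to write $u=v+d$ with $v\in\Gamma$ and $d\in\Delta^\perp$, and then appeals to Corollary~\ref{c:unique_mono_decomp} to force $u=v$. You instead take a $c$-monotone extension $\Gamma'\supseteq\Gamma$, use \ref{t:MMMM-i} to force $A_i'=A_i$ for every $i$, and conclude $\Gamma'=\Gamma$ via the resolvent parameterization~\eqref{e:Gamma_para}. Both are valid; your version is arguably cleaner in that it bypasses the explicit appeal to Corollary~\ref{c:unique_mono_decomp}, though of course that corollary is exactly what makes the parameterization~\eqref{e:Gamma_para} single-valued in the first place, so the two arguments are close cousins.
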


\begin{proof}
\ref{t:MMMM-i} $\Rightarrow$ \ref{t:MMMM-ii} is trivial.
	
\ref{t:MMMM-ii} $\Rightarrow$ \ref{t:MMMM-iii}: Suppose that $A_K$ is maximally monotone and let $a=(a_1,\ldots,a_N)\in X$. We will prove that there exist $x=(x_1,\ldots,x_N)\in\Gamma$ and $d=(d_1,\ldots,d_N)\in\Delta^\perp$ such that $x+d=a$. Indeed, the maximal monotonicity of $A_K$ implies that $\ran(A_K+\Id)=H$. Consequently, by the definition of $A_K$, there exists $x=(x_1,\ldots,x_N)\in\Gamma$ such that $\sum_{i=1}^N x_i=\sum_{i=1}^N a_i $. For each $1\leq i\leq N$ we let $d_i=a_i-x_i$. Then $\sum_{i=1}^N d_i=0$, that is $d=(d_1,\ldots,d_N)\in\Delta^\perp$ and $x+d=a$. 

\ref{t:MMMM-iii} $\Rightarrow$ \ref{t:MMMM-iv}: Fix $1\leq i_0\leq N$. We prove that $A_{i_0}+\Id$ is onto. Indeed, let $s\in H$. We prove that there exists $x=(x_1,\ldots,x_N)\in\Gamma$ such that $(x_{i_0},s)\in\gra(A_{i_0}+\Id)$. Indeed, let $h=(h_1,\ldots,h_N)\in X$ such that $\sum_{i=1}^N h_i=s$. Then \ref{t:MMMM-iii} implies the existence of $x=(x_1,\ldots,x_N)\in\Gamma$ and $d=(d_1,\ldots,d_N)\in\Delta^\perp$ such that $x+d=h$. Consequently, $\sum_{i=1}^N x_i=s$ which implies that $(x_{i_0},s)=\Big(x_{i_0},\sum_{i=1}^N x_i\Big)\in\gra(A_{i_0}+\Id)$. Thus, since $A_{i_0}$ is monotone, we conclude that its resolvent $J_{A_{i_0}}$ is firmly nonexpansive and has full domain. This is true for each $1\leq i_0\leq N$ and since for any $s\in H$ there exists $x=(x_1,\ldots,x_N)\in\Gamma$ such that $\sum_{i=1}^N x_i=s$, we conclude that $\sum_{i=1}^N J_{A_i}(s)=\sum_{i=1}^N x_i=s$, that is, \ref{t:MMMM-iv} holds.

\ref{t:MMMM-iv} $\Rightarrow$ \ref{t:MMMM-v}: 
Since $A_K$ is monotone for every $\varnothing \neq K \subsetneq I$, the resolvent $J_{A_K}$ is firmly nonexpansive and \ref{t:MMMM-iv} implies it has full domain. Furthermore, by employing our notations from the previous step, we see that for every $s\in H$, $\sum_{i\in K} J_{A_i}(s)=\sum_{i\in K} x_i=J_{A_K}(s)$, that is, we have arrived at \ref{t:MMMM-v}. 

\ref{t:MMMM-v} $\Rightarrow$ \ref{t:MMMM-i}: Let $\varnothing \neq K \subsetneq I$. Since  the resolvent $J_{A_K}$ is firmly nonexpansive and has full domain, $A_K$ is maximally monotone. 

Summing up, we have established \ref{t:MMMM-i} $\Rightarrow$ \ref{t:MMMM-ii} $\Rightarrow$ \ref{t:MMMM-iii} $\Rightarrow$ \ref{t:MMMM-iv} $\Rightarrow$ \ref{t:MMMM-v} $\Rightarrow$ \ref{t:MMMM-i}. 

\ref{t:MMMM-iv} $\Rightarrow$ \ref{t:MMMM-vi}:
Since for each $1\leq i\leq N$,  $\dom(J_{A_i})=S(\Gamma)$, then \ref{t:MMMM-iv} $\Rightarrow$ \ref{t:MMMM-vi}. 

\ref{t:MMMM-vi} $\Rightarrow$ \ref{t:MMMM-iii}:
Suppose that $\sumop(\Gamma)=H$ and let $y\in X$. Then there exist $x\in\Gamma$
such that $\sumop(y)=\sumop(x)$. Consequently, $y-x\in\Delta^\perp$, which implies that
$y=x+(y-x)\in\Gamma+\Delta^\perp$.

Finally, we prove that \ref{t:MMMM-iii} implies the maximal $c$-monotonicity
of $\Gamma$. Indeed, suppose that $u$ is $c$-monotonically related to
$\Gamma$. We then write $u=d+v$ where $d\in\Delta^\perp$ and
$v\in\Gamma$. Since $u,v\in \Gamma\cup\{u\}$ which is $c$-monotone and
$u-v\in\Delta^\perp$, Corollary~\ref{c:unique_mono_decomp} implies that
$u=v\in\Gamma$.
\end{proof}

\begin{remark}
To the best of our knowledge, the question whether the multi-marginal
generalization of the other direction of Minty's characterization of maximal
monotonicity holds, namely, whether the maximal $c$-monotonicity of the set
$\Gamma$ implies that $\Gamma+\Delta^\perp=X$, equivalently, that $J_{A_{1}}+\cdots+J_{A_{N}}=\Id$, is still open.
\end{remark}

\begin{remark}
In the partition of the identity in~\eqref{e:sumJAi=Id} and in
Theorem~\ref{t:MMMM}\ref{t:MMMM-iv} we conclude from~\eqref{e:JAK=sumJAi} and
Theorem~\ref{t:MMMM}\ref{t:MMMM-v} that any partial sum of the firmly
nonexpansive mappings is also firmly nonexpansive. This is not the case for
general partitions of the identity into sums of firmly nonexpansive mappings;
indeed,
an example where partial sums of a partition of the identity into firmly
nonexpansive mappings fail to be firmly nonexpansive is provided
in~\cite[Example 4.4]{BBW1}. We elaborate further on this in
Example~\ref{e:failed partial sum} below.
\end{remark}

\section{Multi-marginal maximal $c$-monotonicity via continuity}
\label{s:MMMM_via_cont}

In the classical two-marginal case an important class of maximally monotone
operators is the one of continuous monotone operators. A continuity
criterion guarantees maximality in the multi-marginal framework as well:

\begin{theorem}\label{continuous monotone is maximal}
	Let $\Gamma\subseteq X$ be a $c$-monotone set. Suppose that $\Gamma$ is the graph of a continuous mapping $T=(T_2,\ldots,T_N):X_1\to\Pi_{i=2}^N X_i$, i.e.,
	\begin{equation*}
	\Gamma=\gra(T)=\big\{(x,T_2x,\ldots,T_N x)\big|\ x\in H\big\}
	\end{equation*}
	where for each $2\leq i\leq N$ the mapping $T_i:H\to H$ is continuous. Then $\Gamma$ is maximally $c$-monotone.
\end{theorem}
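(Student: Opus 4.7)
The plan is to leverage the equivalences established in Theorem~\ref{t:MMMM}: it suffices to exhibit a single index set $\varnothing\neq K\subsetneq I$ for which the associated monotone operator $A_K$ is maximally monotone, and the easiest choice is $K=\{1\}$.

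First I would unwind the definitions. Since $\Gamma=\gra(T)$, each point of $\Gamma$ has the form $(x,T_2x,\ldots,T_Nx)$ with $x\in H$, so the graph of $A_1$ is
\[
\gra A_1=\menge{\bigl(x,\,T_2x+\cdots+T_Nx\bigr)}{x\in H}=\gra(T_2+\cdots+T_N).
\]
Thus $A_1$ is actually the single-valued, everywhere-defined map $S:=T_2+\cdots+T_N:H\to H$, and it is continuous because each $T_i$ is continuous. The $c$-monotonicity of $\Gamma$ together with Lemma~\ref{l:c-mono_iff_mono} guarantees that $A_1=S$ is monotone.

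The second step is to invoke the classical fact that a monotone operator $S:H\to H$ which is single-valued, continuous, and defined on all of $H$ is maximally monotone (see, e.g., \cite[Corollary~20.28]{BC2017}; hemicontinuity would in fact suffice). Applied to $S$, this yields that $A_1$ is maximally monotone.

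Finally, I would appeal to the implication \ref{t:MMMM-ii}$\Rightarrow$maximal $c$-monotonicity in Theorem~\ref{t:MMMM} to conclude that $\Gamma$ is maximally $c$-monotone. The only real content is the observation that the fiber structure of $\gra(T)$ collapses $A_1$ to a continuous single-valued operator with full domain; no serious obstacle is expected, since everything else is bookkeeping and an appeal to the standard two-marginal maximality criterion.
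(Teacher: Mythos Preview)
Your argument is correct and coincides almost verbatim with the paper's second proof: identify $A_1=T_2+\cdots+T_N$ as a continuous monotone map with full domain via Lemma~\ref{l:c-mono_iff_mono}, invoke \cite[Corollary~20.28]{BC2017} for maximal monotonicity, and conclude via Theorem~\ref{t:MMMM}\ref{t:MMMM-ii}. The paper additionally supplies a first, self-contained proof that avoids Theorem~\ref{t:MMMM} by directly showing any $c$-monotonically related point lies in $\Gamma$ through a perturbation-and-limit argument, but your route is one of the two the authors themselves chose.
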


We provide two proofs for Theorem~\ref{continuous monotone is maximal}. We begin with a direct proof.

\begin{proof}
 Let $u=(u_1,\ldots,u_N)$ be $c$-monotonically related to $\Gamma$. We prove
 that $u\in\Gamma$. Since $A_1$, induced from the $c$-monotone set $\Gamma\cup\{u\}$, is monotone,
	\begin{equation*}
	\forall x\in H,\quad
	0\leq\bigg\langle u_1-x,\ \sum_{i=2}^N (u_i-T_i x)\bigg\rangle.
	\end{equation*}
	For $t>0$ we let $x_t=u_1+t\sum_{i=2}^N(u_i-T_i u_1)$. 
	Then $x_t\longrightarrow u_1$ as $t\to0^+$ and
	\begin{align*}
	0\leq t\bigg\langle \sum_{i=2}^N(T_i u_1-u_i),\ \sum_{i=2}^N (u_i-T_i x_t)\bigg\rangle.
	\end{align*}
	Since each $T_i$ is continuous, we deduce that
	\begin{align*}
	0&\leq \bigg\langle \sum_{i=2}^N(T_i u_1-u_i),\ \sum_{i=2}^N (u_i-T_i x_t)\bigg\rangle\\[2mm]
	&\xrightarrow[]{t\to 0^+}\ \bigg\langle \sum_{i=2}^N(T_i u_1-u_i),\ \sum_{i=2}^N (u_i-T_i u_1)\bigg\rangle=-\bigg\|\sum_{i=2}^N (u_i-T_i u_1)\bigg\|^2,
	\end{align*}
	which implies 
	\begin{equation*}
	\sum_{i=2}^N u_i=\sum_{i=2}^N T_i u_1;
	\end{equation*}
	equivalently,
	\begin{equation}
	(u_1,\ldots,u_N)-(u_1,T_2u_1,\ldots,T_Nu_1)\in\Delta^\perp.
	\end{equation}
	Thus, by Corollary~\ref{c:unique_mono_decomp}, we have
	$
	(u_1,\ldots,u_N)=(u_1,T_2u_1,\ldots,T_Nu_1)\in\gr T.
	$
\end{proof}

The second proof of Theorem~~\ref{continuous monotone is maximal} employs the classical two-marginal fact that a monotone and continuous mapping is maximally monotone \cite[Corollary~20.28]{BC2017}, Lemma~\ref{l:c-mono_iff_mono} and Theorem~\ref{t:MMMM}.

\begin{proof}
	Since $A_1(x)=\sum_{i=2}^N T_i(x)$ for every $x\in H$, by employing Lemma~\ref{l:c-mono_iff_mono} it follows that $A_1$ is a monotone and continuous mapping, hence, maximally monotone. Consequently, by employing Theorem~\ref{t:MMMM} we conclude that $\Gamma$ is maximally monotone.
\end{proof}

\section{Maximal $c$-monotonicity of $c$-splitting sets, $c$-conjugate tuples and multi-marginal convex analysis}
\label{s:c-split_tuples}

 We begin our discussion of $c$-splitting tuples by a known observation regarding the subdifferentials of the splitting functions: As in~\cite{GS,KS,RU} we observe that if $(f_1,\ldots,f_N)$ is a $c$-splitting tuple of $\Gamma\subseteq X$, then given $x=(x_1,\ldots,x_N)\in\Gamma$ and for any $x_1'\in X_1$,
\begin{align*}
\sum_{i=1}^N f_i(x_i)&=c(x_1,\ldots,x_N)\\[1mm]
\text{and}
\qquad
c(x_1',x_2,\ldots,x_N)&\leq f_1(x_1')+\sum_{i=2}^N f_i(x_i).
\end{align*}
Summing up these two inequalities followed by simplifying, we see that
\begin{equation*}
f_1(x_1)+\scal{x_1'}{x_2+\cdots+x_N}\leq f_1(x_1')+\scal{x_1}{x_2+\cdots+x_N},\ \ \ \ \text{that is,}\ \ \ \ \ \ \sum_{i=2}^N x_i\in\partial f_1(x_1).
\end{equation*}
Similarly, we conclude that for each $1\leq i_0\leq N$, 
\begin{equation}\label{e:split_subdiff i}
\sum_{i\neq i_0} x_i\in\partial f_{i_0}(x_{i_0}).
\end{equation}
Since $\gra A_{i_0}=\Big\{\big( x_{i_0},\sum_{i\neq i_0}x_i\big)\ \Big|\ (x_1,\ldots,x_N)\in \Gamma\ \Big\}$, this implies
\begin{equation}\label{e:split_subdiff ii}	
	\gra(A_{i_0})\subseteq\gra(\partial f_{i_0}).
\end{equation}
Similar observations and $c$-monotonicity properties of $\Gamma$ from Section 2 are also related to the \emph{Wasserstein barycenter} as can be seen, for example, in~\cite{AC}. 

We continue our discussion by a characterization of $c$-splitting tuples and their generated $c$-splitting sets in terms of the Moreau envelopes of the splitting functions.

\begin{theorem}\label{envelopes inequality and equality}
	For each $1\leq i\leq N$, 
	let $f_i:X_i\to\RX$ be proper, lower semicontinuous, and convex.  
Then $c\leq\bigoplus_{i=1}^N f_i$ if and only if
\begin{equation}\label{e:env_ineq}
\forall s\in H,\quad
e_{f_1^*}(s)+\cdots+e_{f_N^*}(s)\leq q(s).
\end{equation}
Now assume this is the case, and  let $\Gamma\subseteq X$ be the $c$-splitting set generated  by $(f_1,\ldots,f_N)$. Then equality in~\eqref{e:env_ineq} holds if and only if $s=x_1+\cdots+x_N$ where $(x_1\ldots,x_N)\in\Gamma$. 
\end{theorem}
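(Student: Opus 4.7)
The plan is to reduce both parts to a single algebraic identity obtained by dualising the Moreau envelope. First I would use the Moreau decomposition $e_f + e_{f^*} = q$ together with the expansion $q(s-y) = q(s) - \langle s,y\rangle + q(y)$ inside the definition $e_f(s) = \inf_y(f(y)+q(s-y))$ to derive the dual formula
$$e_{f^*}(s) \;=\; \sup_{y\in H}\bigl(\langle s,y\rangle - f(y) - q(y)\bigr)\;=\;(f+q)^*(s).$$
Summing this over $i=1,\dots,N$ decouples the variables, yielding
$$\sum_{i=1}^N e_{f_i^*}(s) \;=\; \sup_{(x_1,\dots,x_N)\in X}\Bigl[\Bigl\langle s,\sum_{i=1}^N x_i\Bigr\rangle - \sum_{i=1}^N f_i(x_i) - \sum_{i=1}^N q(x_i)\Bigr].$$

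Next I would complete the square. Using $q(s-\sum_i x_i) = q(s) - \langle s,\sum_i x_i\rangle + q(\sum_i x_i)$ together with the identity $c(x_1,\dots,x_N) = q(\sum_i x_i) - \sum_i q(x_i)$ (valid for our quadratic cost), a direct rearrangement produces the key identity
$$q(s) - \Bigl[\Bigl\langle s,\sum_i x_i\Bigr\rangle - \sum_i f_i(x_i) - \sum_i q(x_i)\Bigr] \;=\; q\Bigl(s-\sum_i x_i\Bigr) \;+\; \Bigl[\sum_i f_i(x_i) - c(x_1,\dots,x_N)\Bigr].$$
The first equivalence is then almost immediate: $\sum_{i} e_{f_i^*}(s) \leq q(s)$ holding for every $s\in H$ is, by the sup representation above, the same as non-negativity of the right-hand side for all $s$ and all $x_1,\dots,x_N$. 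The direction ``$\Leftarrow$'' is clear, since under $c\leq\bigoplus_i f_i$ both bracketed terms on the right are non-negative. For ``$\Rightarrow$'' I would specialise to $s=\sum_i x_i$: this annihilates the quadratic summand and leaves $c(x_1,\dots,x_N) \leq \sum_i f_i(x_i)$ for arbitrary $x_i$, which is exactly $c\leq\bigoplus_i f_i$.

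For the equality assertion, I would use that the supremum defining $\sum_{i} e_{f_i^*}(s)$ is always attained, namely at $x_i = \prox_{f_i}(s)$ (the proximity operators exist by the cited uniqueness of minimisers for proper lsc convex functions). Consequently, $\sum_{i} e_{f_i^*}(s) = q(s)$ is equivalent to the right-hand side of the key identity vanishing for some tuple $(x_1,\dots,x_N)$. Because both summands on the right are non-negative under $c\leq \bigoplus_i f_i$, both must vanish simultaneously, which is precisely $s=\sum_i x_i$ together with $\sum_i f_i(x_i) = c(x_1,\dots,x_N)$, i.e.\ $(x_1,\dots,x_N)\in\Gamma$. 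Conversely, given $s=\sum_i x_i$ with $(x_1,\dots,x_N)\in\Gamma$, I would plug this tuple into the sup to obtain $\sum_{i} e_{f_i^*}(s) \geq q(s)$, which combined with the reverse inequality already proved delivers equality.

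The only step that requires genuine care is recognising the clean algebraic identity above; the completion-of-squares that brings $q(s-\sum_i x_i)$ and $\sum_i f_i(x_i)-c(x_1,\dots,x_N)$ out as the two relevant non-negative quantities is what makes the whole argument collapse to a sign check. Beyond this, the reasoning is purely bookkeeping, with no analytic obstacle.
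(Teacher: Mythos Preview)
Your proof is correct, and the key algebraic identity you isolate is exactly right: with $c(x)=q(\sum_i x_i)-\sum_i q(x_i)$ the completion of squares goes through verbatim, and attainment of each individual supremum at $x_i=\prox_{f_i}(s)$ (via $s-x_i\in\partial f_i(x_i)$) justifies the equality analysis.

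The paper argues somewhat differently. For the inequality, rather than writing out your explicit identity, it observes that $c\leq\bigoplus_i f_i$ is the same as $\phi\leq\bigoplus_i(f_i+q)$ with $\phi(x)=q(\sum_i x_i)$, and then applies Fenchel conjugation as an order-reversing map on $X$: since $\phi^*=q\oplus\iota_\Delta$ and $(\bigoplus_i(f_i+q))^*=\bigoplus_i e_{f_i^*}$, the inequality restricted to the diagonal recovers~\eqref{e:env_ineq}, while off-diagonal $\phi^*=+\infty$ makes the global conjugate inequality automatic. For the equality case the paper proceeds via Fenchel--Young for each $(f_i+q)$, and in the forward direction first matches gradients ($\sum_i\nabla e_{f_i^*}(s)=s$), then passes through Moreau's decomposition $e_{f_i}+e_{f_i^*}=q$ to rewrite equality as $\sum_i e_{f_i}(s)=(N-1)q(s)$ before unwinding. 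Your route is more elementary and self-contained: one identity does all the work, and the equality case collapses to the observation that two non-negative summands vanish simultaneously. The paper's route, by contrast, situates the result more explicitly within the conjugate-duality and Moreau-decomposition machinery it later builds upon.
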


\begin{proof}
The inequality $c\leq\bigoplus_{i=1}^N f_i$ holds if and only if for all $(x_1,\ldots,x_N)\in X$,
\begin{align}
&c(x_1,\ldots,x_N)\leq \sum_{i=1}^N f_i(x_i)\label{e:c_leq_fi}\\
\iff\quad
&q(x_1+\cdots+x_N)=c(x_1,\ldots,x_N)+\sum_{i=1}^{N}q(x_i)\leq \sum_{i=1}^{N}\big(f_i(x_i)+q(x_i)\big).\label{e:q_leq_fi+q}
\end{align}
We see that \eqref{e:c_leq_fi} holds with equality only when $(x_1,\ldots,x_N)\in\Gamma$ if and only if \eqref{e:q_leq_fi+q} holds with equality only when $(x_1,\ldots,x_N)\in\Gamma$. Let $\phi:X\to\RR$ be defined by 
$$
\phi(x_1,\ldots,x_N)=q(x_1+\cdots+x_N).
$$
Then, using \cite[Corollary~15.28(i)]{BC2017}, we have 
\begin{equation*}
\forall(x_1,\ldots,x_N)\in X,\quad
\phi^*(x_1,\ldots,x_N)=q(x_1)+\iota_{\Delta}(x_1,\ldots,x_N).
\end{equation*}
Since for each $1\leq i\leq N$, $(f_i+q)^*=e_{f_i^*}$ (see, for example, \cite[Proposition~14.1]{BC2017}), we arrive at
\begin{equation*}
\Big(\bigoplus_{i=1}^N (f_i+q)\Big)^*=\bigoplus_{i=1}^N (f_i+q)^*=\bigoplus_{i=1}^N e_{f_i^*}.
\end{equation*}
Consequently, (classical) Fenchel conjugation transforms~\eqref{e:q_leq_fi+q} into~\eqref{e:env_ineq} and vise versa. 

We now address the case of equality in~\eqref{e:env_ineq}. Let $(x_1\ldots,x_N)\in X$ and $s=x_1+\cdots+x_n$. Then for each $1\leq i\leq N$, by the Fenchel-Young inequality,
\begin{equation}\label{Y-F for envelopes}
\scal{s}{x_i}\leq (f_i+q)^*(s)+(f_i+q)(x_i)=e_{f_i^*}(s)+(f_i+q)(x_i)
\end{equation}
with equality if and only if $x_i\in\partial (f_i+q)^*(s)$, i.e., since $(f_i+q)^*=e_{f_i^*}$ is Fr\'echet differentiable (see, e.g., \cite[Proposition~12.30]{BC2017}), $x_i=\nabla e_{f_i^*}(s)$ . 
By summing up~\eqref{Y-F for envelopes} over $i$, we obtain
\begin{equation}\label{multi-marginal Y-F for envelopes}
\scal{s}{s}=\sum_{i=1}^N\scal{s}{x_i}\leq\sum_{i=1}^N \big(e_{f^*_i}(s)+(f_i+q)(x_i)\big)
\end{equation}
with equality if and only if $x_i=\nabla e_{f_i^*}(s)$ for every $1\leq i\leq N$. 

($\Leftarrow$): Suppose that $x=(x_1,\ldots,x_N)$ is in the $c$-splitting set
$\Gamma$ generated by $(f_1,\ldots,f_N)$ and set $s=\sumop(x)$. We prove equality
in \eqref{e:env_ineq}. It follows from~\eqref{e:split_subdiff i} that for each
$1\leq i\leq N$,
\begin{equation}
\sum_{j\neq i}x_j=s-x_i\in\partial f_i(x_i)
\quad\iff\quad
s\in\partial (f_i+q)(x_i),
\end{equation}
which, in turn, implies that $x_i\in\partial (f_i+q)^*(s)$, that is, $x_i=\nabla e_{f_i^*}(s)$. Since in this case there is equality in~\eqref{multi-marginal Y-F for envelopes} and in~\eqref{e:q_leq_fi+q}, we obtain equality in~\eqref{e:env_ineq}. 

($\Rightarrow$): Let $s\in H$ be a point where equality in~\eqref{e:env_ineq}
holds. Since $\sum_{i=1}^N e_{f_i^*}$ and $q$ are Fr\'echet differentiable
and $\sum_{i=1}^N e_{f_i^*}\leq q$, then at the point of equality $s$ we have
$$
\nabla\Big(\sum_{i=1}^N e_{f_i^*}\Big)(s)=\nabla q(s)=s.
$$ 
For each $1\leq i\leq N$, set $x_i=\prox_{f_i}(s)=\nabla e_{f^*_i}(s)$ (see, e.g., \cite[eq~(14.7)]{BC2017}). Then it follows that $s=x_1+\cdots+x_N$. Thus, in order to complete the proof it is enough to prove that $(x_1,\ldots,x_N)\in\Gamma$ or, equivalently, that there is equality in~\eqref{e:q_leq_fi+q}. Indeed, Moreau's decomposition (see, e.g., \cite[Remark~14.4]{BC2017}) implies that $e_{f_i}+e_{f_i^*}=q$ for each $1\leq i\leq N$. Consequently,
\begin{equation*}
\sum_{i=1}^N e_{f_i^*}(s)= q(s)
\qquad
\text{is equivalent to}
\qquad
\sum_{i=1}^N e_{f_i}(s)=(N-1)q(s).
\end{equation*}
We also note that for each $1\leq i\leq N$, $x_i=\prox_{f_i}(s)$ implies that
\begin{equation*}
e_{f_i}(s)=\min_{x\in H}\big(f_i(x)+q(s-x)\big)=f_i(x_i)+q(s-x_i).
\end{equation*} 
Thus, we arrive at
\begin{align*}
&\sum_{i=1}^N \big(q(s-x_i)+f_i(x_i)\big)=(N-1)q(s)\\[1mm]
\Leftrightarrow\qquad &-\sum_{i=1}^N\scal{s}{x_i}+\sum_{i=1}^N\big(f_i(x_i)+q(x_i)\big)=-q(s)\\[1mm]
\Leftrightarrow\qquad &\sum_{i=1}^N\big(f_i(x_i)+q(x_i)\big)=q(s).
\end{align*}
\end{proof}

We now address $c$-conjugation.

\begin{definition}[$c$-conjugate tuple]
	For each $1\leq i\leq N$, let $f_i:X_i\to\RX$ be a proper function. We say that $(f_1,\ldots,f_N)$ is a {\em $c$-conjugate tuple} if for each $1\leq i_0\leq N$,
	\begin{equation*}
	f_{i_0}(x_{i_0})=\Big(\bigoplus_{i\neq i_0}f_i\Big)^c(x_{i_0}) :=\sup_{i\neq i_0,\ x_i\in X_i}\ c(x_1,\ldots,x_{i_0},\ldots,x_N)-\sum_{i\neq i_0}f_i(x_i),\ \ \ \ \ \ x_{i_0}\in X_{i_0}.
	\end{equation*} 
\end{definition}
It follows that if $(f_1,\ldots,f_N)$ is a $c$-conjugate tuple, then $f_i$ is lower semicontinuous and convex for each $1\leq i\leq N$. Furthermore, it is known (see \cite{GS} and \cite{CN}) that given a $c$-splitting tuple $(u_1,\ldots,u_N)$ of a set $\Gamma\subseteq X$, it can be relaxed into a $c$-conjugate $c$-splitting tuple $(f_1,\ldots,f_N)$ of $\Gamma$ by setting
\begin{equation*}
f_{1}=\Big(\bigoplus_{2\leq i\leq N}u_i\Big)^c
\end{equation*}
inductively,
\begin{equation*}
f_{i_0}=\Big(\bigoplus_{1\leq i\leq i_0-1}f_i \oplus \bigoplus_{i_0+1\leq i\leq N}u_i\Big)^c
\qquad\text{for}\ \ 2\leq i_0\leq N-1,
\end{equation*}
and finally
\begin{equation*}
f_N=\Big(\bigoplus_{1\leq i\leq N-1}f_i\Big)^c.
\end{equation*}

In the case $N=2$, let $f_1:X_1\to\RX$ be proper, lower semicontinuous and convex, let $f_2=f_1^*:X_2\to\RX$ be its conjugate and let $\Gamma=\gra(\partial f_1)\subseteq X_1\times X_2$. Then it is well known that $\Gamma$ is maximally monotone, see, e.g., \cite[Theorem~20.25]{BC2017}. Since $f_1=f_1^{**}=f_2^c$ and also $f_2=f_1^c$, then we can restate as follows: 
\begin{quote}
	Let $\Gamma\subseteq X_1\times X_2$ be the $c$-splitting set generated by the $c$-conjugate pair $(f_1,f_2)$. Then $\Gamma$ is maximally $c$-monotone and determines its $c$-conjugate $c$-splitting tuple $(f_1,f_2)$ uniquely up to an additive constant pair $(\rho,-\rho)$ with $\rho\in\RR$.
\end{quote}
A generalization to an arbitrary $N\geq 2$ would be
\begin{quote}
	Let $\Gamma\subseteq X$ be the $c$-splitting set generated by the $c$-conjugate tuple $(f_1,\ldots,f_N)$. Then $\Gamma$ is maximally $c$-monotone and determines its $c$-conjugate $c$-splitting tuple $(f_1,\ldots,f_N)$ uniquely up to an additive constant tuple $(\rho_1,\ldots,\rho_N)$ such that $\sum_{i=1}^N \rho_i=0$.  
\end{quote} 
To the best of our knowledge, whether or not this latter assertion is true in
general is still open. We do, however, provide a positive answer in a more
particular case in Theorem~\ref{3-marginal smooth conjugate} and additional
insight in Theorem~\ref{t:split_max_c-mono}. 

Furthermore, we note that in the case $N=2$,
given a conjugate pair $(f_1,f_2)$, Moreau's decomposition can be restated as
\begin{equation}\label{e:moreau_decomp}
e_{f_1^*}+e_{f_2^*}=q
\qquad\text{and}\qquad \prox_{f_1}+\prox_{f_2}=\Id.
\end{equation}
Combining our discussion with Theorems~\ref{envelopes inequality and equality} and \ref{l:c-mono_iff_mono}, we arrive at the following generalized multi-marginal convex analytic assertions which, in particular, generalize the decomposition \eqref{e:moreau_decomp}. To this end, we again recall that for each $1\leq i_0\leq N$, 
$$\gra A_{i_0}=\Big\{\big( x_{i_0},\sum_{i\neq i_0}x_i\big)\ \Big|\ (x_1,\ldots,x_N)\in \Gamma\ \Big\}.$$

\begin{theorem}\label{t:split_max_c-mono}
	For each $1\leq i\leq N$, let $f_i:X_i\to\RX$ be 
	convex, lower semicontinuous, and proper. Suppose that $\Gamma\subseteq X$ is the $c$-splitting set generated  by $(f_1,\ldots,f_N)$. Then the following assertions are equivalent:
	\begin{enumerate}
		\item\label{t:split_max_c-mono-i} There exist $1\leq i_0\leq N$ such that $A_{i_0}$ is maximally monotone;
		
		\item\label{t:split_max_c-mono-ii} There exist $1\leq i_0\leq N$ such that $A_{i_0}=\partial f_{i_0}$;
		
		\item\label{t:split_max_c-mono-iii} $A_{i}=\partial f_{i}\ $ for each $1\leq i\leq N$;
		
		\item\label{t:split_max_c-mono-iv} $\prox_{f_1}+\cdots+\prox_{f_N}=\Id$;
		
		\item\label{t:split_max_c-mono-v} $e_{f_1^*}+\cdots+e_{f_N^*}=q$.
	\end{enumerate}	
	In this case
	\begin{enumerate}[label={\rm(\Alph*)}]
		\item\label{t:split_max_c-mono-A} $\Gamma$ is maximally $c$-monotone (and, consequently, maximally $c$-cyclically monotone);
		
  \item\label{t:split_max_c-mono-B} $(f_1,\ldots,f_N)$ is a $c$-conjugate
  $c$-splitting tuple of $\Gamma$. Moreover, $\Gamma$ determines its
  $c$-conjugate $c$-splitting tuple $(f_1,\ldots,f_N)$ uniquely up to an
  additive constant tuple $(\rho_1,\ldots,\rho_N)$ such that $\sum_{i=1}^N
  \rho_i=0$.
	\end{enumerate}
\end{theorem}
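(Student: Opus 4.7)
The plan is to run a cycle (i) $\Rightarrow$ (ii) $\Rightarrow$ (iii) $\Rightarrow$ (iv) $\Rightarrow$ (v) $\Rightarrow$ (i) by combining three ingredients already in the paper: the inclusion $\gra(A_i)\subseteq\gra(\partial f_i)$ from~\eqref{e:split_subdiff ii}, the maximal monotonicity of $\partial f_i$ for a proper lower semicontinuous convex $f_i$ (Rockafellar), and the equivalences in Theorem~\ref{t:MMMM} together with the envelope characterization of $\Gamma$ in Theorem~\ref{envelopes inequality and equality}. Since every $c$-splitting set is $c$-cyclically monotone, Lemma~\ref{l:c-mono_iff_mono} already delivers the monotonicity of every $A_K$, so only maximality needs to be argued throughout.

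For (i) $\Rightarrow$ (ii), the inclusion $A_{i_0}\subseteq\partial f_{i_0}$ between a maximally monotone $A_{i_0}$ and a monotone $\partial f_{i_0}$ forces equality. For (ii) $\Rightarrow$ (iii), $A_{i_0}=\partial f_{i_0}$ is maximally monotone, Theorem~\ref{t:MMMM}\ref{t:MMMM-ii}$\Rightarrow$\ref{t:MMMM-i} propagates maximality to every $A_i$, and the same inclusion-plus-monotonicity argument yields $A_i=\partial f_i$ for each $i$. For (iii) $\Rightarrow$ (iv), Theorem~\ref{t:MMMM}\ref{t:MMMM-iv} applied to the maximally monotone $A_i$'s gives $J_{A_1}+\cdots+J_{A_N}=\Id$, and substituting $J_{A_i}=J_{\partial f_i}=\prox_{f_i}$ produces (iv).

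The crux is (iv) $\Rightarrow$ (v). Here I would integrate: from $\nabla e_{f_i^*}=\prox_{f_i}$ and $\nabla q=\Id$, condition (iv) says $\nabla\bigl(q-\sum_i e_{f_i^*}\bigr)\equiv 0$ on $H$, so $q-\sum_i e_{f_i^*}$ is a constant, which is nonnegative by the inequality part of Theorem~\ref{envelopes inequality and equality}. To force this constant to be $0$, I would exhibit one point of equality: for any $(y_1,\ldots,y_N)\in\Gamma$, the ($\Leftarrow$) direction of Theorem~\ref{envelopes inequality and equality} at $s_0=\sumop(y)$ gives $\sum_i e_{f_i^*}(s_0)=q(s_0)$. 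This is the main obstacle: securing at least one point in $\Gamma$ out of (iv) alone, which I anticipate is handled by taking $y_i=\prox_{f_i}(s)$ for an arbitrary $s$, using (iv) and the resolvent identity $s-y_i=\sum_{j\neq i}y_j\in\partial f_i(y_i)$, and then upgrading these subdifferential relations to the equality $c(y)=\sum_i f_i(y_i)$ via the $c$-splitting structure $c\leq\bigoplus_i f_i$. Finally, (v) $\Rightarrow$ (i) is immediate: the ($\Rightarrow$) direction of Theorem~\ref{envelopes inequality and equality} promotes pointwise equality into $\sumop(\Gamma)=H$, and Theorem~\ref{t:MMMM}\ref{t:MMMM-vi}$\Rightarrow$\ref{t:MMMM-i} yields (i).

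For the ``in this case'' conclusions: (A) follows from Theorem~\ref{t:MMMM} (maximal $c$-monotonicity), and maximal $c$-cyclic monotonicity follows because any proper $c$-cyclically monotone extension of $\Gamma$ would in particular be $c$-monotone, violating the established maximality. For (B), the inequality $c\leq\bigoplus_i f_i$ gives the bound $\bigl(\bigoplus_{i\neq i_0}f_i\bigr)^c(x_{i_0})\leq f_{i_0}(x_{i_0})$; to realize the supremum at $x_{i_0}\in\dom f_{i_0}$, I would pick $u\in\partial f_{i_0}(x_{i_0})=A_{i_0}(x_{i_0})$, use the definition of $A_{i_0}$ to produce $(x_j)_{j\neq i_0}$ with $(x_1,\ldots,x_N)\in\Gamma$, and exploit $\sum_j f_j(x_j)=c(x_1,\ldots,x_N)$ on $\Gamma$. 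Uniqueness up to an additive constant tuple summing to zero then follows by applying the $c$-conjugacy definition to two competing tuples at a common point of $\Gamma$, which forces the pointwise differences $g_i-f_i$ to be constants with zero total sum.
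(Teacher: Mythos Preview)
Your cycle (i)$\Rightarrow$(ii)$\Rightarrow$(iii)$\Rightarrow$(iv) and (v)$\Rightarrow$(i), as well as conclusion~(A), match the paper's argument exactly. The obstacle you flag at (iv)$\Rightarrow$(v) is genuine, but your proposed resolution fails: the pointwise conditions $\sum_{j\neq i}y_j\in\partial f_i(y_i)$ for all $i$ do \emph{not} force $(y_1,\ldots,y_N)\in\Gamma$. For a concrete failure take $N=2$ and $f_1=f_2=q+1$; then $\prox_{f_1}+\prox_{f_2}=\tfrac{1}{2}\Id+\tfrac{1}{2}\Id=\Id$, and with $y_1=y_2=s/2$ both subdifferential inclusions hold, yet $f_1(y_1)+f_2(y_2)=c(y_1,y_2)+2$, so $\Gamma=\varnothing$ while (iv) holds. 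The paper does not try to manufacture a point of $\Gamma$ out of (iv); it simply integrates (iv) to obtain (v) up to an additive constant and then invokes Theorem~\ref{envelopes inequality and equality} at any $s_0\in\sumop(\Gamma)$ to force that constant to vanish --- in other words, $\Gamma\neq\varnothing$ is used tacitly rather than derived from (iv).

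Your approach to (B) diverges from the paper's and is incomplete as written. Picking $u\in\partial f_{i_0}(x_{i_0})$ only reaches points $x_{i_0}\in\dom\partial f_{i_0}$, which need not exhaust $\dom f_{i_0}$, and says nothing where $f_{i_0}=+\infty$; and your uniqueness sketch (``applying the $c$-conjugacy definition at a common point of $\Gamma$'') does not explain why the differences $g_i-f_i$ must be \emph{constant}. The paper's route is shorter and avoids these issues: it first invokes the relaxation construction recalled just before the theorem to obtain \emph{some} $c$-conjugate $c$-splitting tuple $(h_1,\ldots,h_N)$ of $\Gamma$, then combines (iii) with~\eqref{e:split_subdiff ii} to get $\gra(\partial f_i)=\gra(A_i)\subseteq\gra(\partial h_i)$, forcing $\partial f_i=\partial h_i$ by maximal monotonicity of $\partial f_i$. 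Hence $f_i=h_i+\rho_i$ for constants $\rho_i$, and evaluating at any point of $\Gamma$ gives $\sum_i\rho_i=0$; $c$-conjugacy then transfers from $(h_i)$ to $(f_i)$ under such a shift. The same subdifferential-comparison argument, applied to an arbitrary competing $c$-conjugate $c$-splitting tuple of $\Gamma$, delivers uniqueness.
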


\begin{proof}
	\ref{t:split_max_c-mono-i} $\Rightarrow$ \ref{t:split_max_c-mono-ii}: $\partial f_{i_0}$ is monotone and $\gra(A_{i_0})\subseteq\gra(\partial f_{i_0})$ (see~\eqref{e:split_subdiff ii}). Consequently, since $A_{i_0}$ is maximally monotone,  it follows that $A_{i_0}=\partial f_{i_0}$. 
	
 \ref{t:split_max_c-mono-ii} $\Rightarrow$ \ref{t:split_max_c-mono-iii}:
 $A_{i_0}=\partial f_{i_0}$ is maximally monotone as the subdifferential of a
proper lower semicontinuous convex function. Consequently, it follows from
 Theorem~\ref{t:MMMM}\ref{t:MMMM-i}\&\ref{t:MMMM-ii} that $A_i$ is maximally
 monotone for each $1\leq i\leq N$. Now, $\partial f_i$ is monotone and
 $\gra(A_{i})\subseteq\gra(\partial f_{i})$ (see~\eqref{e:split_subdiff ii}).
 Consequently, since $A_i$ is maximally monotone, it follows that
 $A_i=\partial f_i$.
	
 \ref{t:split_max_c-mono-iii} $\Rightarrow$ \ref{t:split_max_c-mono-iv}:
 Follows from Theorem~\ref{t:MMMM}\ref{t:MMMM-i}\&\ref{t:MMMM-iv} since
 $A_i=\partial f_i$ is maximally monotone and $\prox f_i=J_{\partial
 f_i}=J_{A_i}$.
	
 \ref{t:split_max_c-mono-iv} $\Rightarrow$ \ref{t:split_max_c-mono-v}: By
 integrating \ref{t:split_max_c-mono-iv} we obtain the equality in
 \ref{t:split_max_c-mono-v} up to an additive constant.
 Theorem~\ref{envelopes inequality and equality} implies that equality in
 \ref{t:split_max_c-mono-v} holds on $S(\Gamma)$; thus, the additive constant
 vanishes.
	
 \ref{t:split_max_c-mono-v} $\Rightarrow$ \ref{t:split_max_c-mono-i}: By
 Theorem~\ref{envelopes inequality and equality} equality in
 \ref{t:split_max_c-mono-v} holds only on $\sumop(\Gamma)$. Consequently,
 \ref{t:split_max_c-mono-v} implies that $\sumop(\Gamma)=H$. By employing
 Theorem~\ref{t:MMMM}\ref{t:MMMM-vi}\&\ref{t:MMMM-i}, we obtain
 \ref{t:split_max_c-mono-i}.
	
	In this case Theorem~\ref{t:MMMM} also implies $\Gamma$ is maximally $c$-monotone. Thus, it remains to prove \ref{t:split_max_c-mono-B}. By our preliminary discussion there exists a $c$-conjugate $c$-splitting tuple $(h_1,\ldots,h_N)$ of $\Gamma$. From \ref{t:split_max_c-mono-iii} and from \eqref{e:split_subdiff ii} we conclude that $\gra(\partial f_{i})=\gra(A_{i})\subseteq\gra(\partial h_{i}) $ which, by maximality, implies that $\partial f_i=\partial h_i$ for each $1\leq i\leq N$. Here there exists a constant tuple $(\rho_1,\ldots,\rho_N)\in\RR^N$ such that $(f_1,\ldots,f_N)=(h_1,\ldots,h_N)+(\rho_1,\ldots,\rho_N)$. For $(x_1,\ldots,x_N)\in\Gamma$ the equality $\sum_{i=1}^N f_i(x_i)=\sum_{i=1}^N h_i(x_i)$ implies that $\sum_{i=1}^N \rho_i=0$. Consequently, the fact that for each $1\leq i_0\leq N$
	\begin{equation*}
	f_{i_0}-\rho_{i_0}=\Big(\bigoplus_{i\neq i_0}^N(f_{i}-\rho_{i})\Big)^c
	\end{equation*}
	implies that $(f_1,\ldots,f_N)$ is a $c$-conjugate tuple.
\end{proof}

We now provide a smoothness criteria in the 3-marginal case where Theorem~\ref{t:split_max_c-mono}\ref{t:split_max_c-mono-i}--\ref{t:split_max_c-mono-v}\&\ref{t:split_max_c-mono-B} are equivalent and imply maximal $c$-monotonicity. To this end we will employ the following facts.

\begin{fact}
{\rm (\cite[Theorem~14.19]{BC2017})}
\label{Toland}
	Let $g:H\to\RX$ be proper, let $h:H\to\RX$ be proper, lower semicontinuous and convex. Set
	\begin{equation*}
	f:H\to[-\infty,+\infty]:x\mapsto\begin{cases}
	g(x)-h(x), & \ \ \ x\in\dom(g);\\
	+\infty, & \ \ \ x\notin\dom(g).
	\end{cases}
	\end{equation*}
	Then
	\begin{equation*}
	f^*(y)=\sup_{v\in\dom(h^*)}\big(g^*(y+v)-h^*(v)\big).
	\end{equation*}
\end{fact}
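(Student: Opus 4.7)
The plan is to reduce the claim to two ingredients: the Fenchel--Moreau biconjugate representation of $h$, and the elementary interchange $\sup_a\sup_b=\sup_b\sup_a$. Because $h$ is proper, lower semicontinuous and convex, Fenchel--Moreau applies and gives $h=h^{**}$, so that for every $x\in H$,
\begin{equation*}
h(x)=\sup_{v\in H}\bigl(\scal{v}{x}-h^*(v)\bigr)=\sup_{v\in\dom h^*}\bigl(\scal{v}{x}-h^*(v)\bigr),
\end{equation*}
where the restriction to $\dom h^*$ is permissible because outside $\dom h^*$ the inner expression is $-\infty$, and $\dom h^*\neq\varnothing$ precisely because $h$ is proper, lower semicontinuous and convex.

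First I would unwind the definition of $f^*$ using $f=+\infty$ on $H\setminus\dom g$, obtaining
\begin{equation*}
f^*(y)=\sup_{x\in H}\bigl(\scal{y}{x}-f(x)\bigr)=\sup_{x\in\dom g}\bigl(\scal{y}{x}-g(x)+h(x)\bigr).
\end{equation*}
Substituting the biconjugate formula for $h$ turns this into a double supremum
\begin{equation*}
f^*(y)=\sup_{x\in\dom g}\,\sup_{v\in\dom h^*}\bigl(\scal{y+v}{x}-g(x)-h^*(v)\bigr).
\end{equation*}

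Then I would swap the two suprema and factor $h^*(v)$ out of the inner one, giving
\begin{equation*}
f^*(y)=\sup_{v\in\dom h^*}\Bigl(\sup_{x\in\dom g}\bigl(\scal{y+v}{x}-g(x)\bigr)-h^*(v)\Bigr)=\sup_{v\in\dom h^*}\bigl(g^*(y+v)-h^*(v)\bigr),
\end{equation*}
the last equality using that $g=+\infty$ outside $\dom g$, so the inner supremum coincides with $g^*(y+v)$. There is no genuine obstacle: the computation is little more than Fenchel--Moreau packaged with a harmless swap of suprema. The only points of care are to ensure that the subtraction defining $f$ is meaningful (e.g.\ by tacitly requiring $\dom g\subseteq\dom h$ so that no $\infty-\infty$ arises, or else adopting a consistent convention) and that $\dom h^*$ is nonempty, both of which are already in place.
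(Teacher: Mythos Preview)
The paper does not supply a proof of this statement: it is recorded as a \emph{Fact} and simply cited from \cite[Theorem~14.19]{BC2017}. Your argument is correct and is essentially the standard proof of Toland's duality formula---Fenchel--Moreau for $h$ followed by the interchange of two suprema---which is precisely the route taken in the cited reference. Your closing caveat about $\infty-\infty$ is slightly overcautious: since $x\in\dom g$ forces $g(x)<+\infty$ and $h$ takes values in $\left]-\infty,+\infty\right]$, the difference $g(x)-h(x)$ is always well defined (equal to $-\infty$ when $h(x)=+\infty$), so no extra hypothesis such as $\dom g\subseteq\dom h$ is needed.
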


\begin{fact}\label{Solov}
    {\rm(\cite[Corollary 2.3]{Sol})}
	Let $f:\RR^n\to\RR$ be proper and lower semicontinuous. If $f^*$ is essentially smooth, then $f$ is convex.
	{\color{blue} }
\end{fact}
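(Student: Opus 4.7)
The plan is to reduce the claim to the pointwise equality $f = f^{**}$. Since $f$ is proper and lsc by hypothesis, and since $f^{**}$ is always proper, lsc, convex, and satisfies $f^{**}\leq f$, we have $f$ convex if and only if $f = f^{**}$. So the task becomes to prove the reverse inequality $f\leq f^{**}$ on $\RR^n$.

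To extract usable information about $f^{**}$ from the hypothesis on $f^*$, I would invoke the Rockafellar duality between essential smoothness and essential strict convexity: a proper lsc convex function $g$ on $\RR^n$ is essentially strictly convex if and only if $g^*$ is essentially smooth. Applying this with $g = f^{**}$ and using $(f^{**})^*=f^{***}=f^*$, the hypothesis yields that $f^{**}$ is essentially strictly convex. In particular, $\nabla f^*$ is defined and continuous on the open set $U:=\inte\dom f^*$, and for each $u\in U$ the point $x_u:=\nabla f^*(u)$ is the unique primal vector satisfying the Fenchel-Young equality $f^{**}(x_u)+f^*(u)=\scal{x_u}{u}$, with $\partial f^*(u)=\{x_u\}$.

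The crux is then to promote $f = f^{**}$ at every $x_u$ and thereafter to all of $\RR^n$. For fixed $u\in U$, consider the sup defining $f^*(u)=\sup_x(\scal{x}{u}-f(x))$. I would show this sup is attained; any maximizer $x^*$ then satisfies $f(x^*)=\scal{x^*}{u}-f^*(u)\leq f^{**}(x^*)$, and combining with $f^{**}\leq f$ and the Fenchel-Young inequality gives $f^{**}(x^*)+f^*(u)=\scal{x^*}{u}$, i.e., $x^*\in\partial f^*(u)=\{x_u\}$. Thus $x^*=x_u$ and $f(x_u)=f^{**}(x_u)$. Finally, an arbitrary $x_0\in\RR^n$ is handled either by writing $x_0=x_u$ for some $u\in U$, or, failing that, by a density/limit argument using continuity of $\nabla f^*$ on $U$ together with the lower semicontinuity of $f$; points $x_0\notin\dom f^{**}$ are trivial since then $f^{**}(x_0)=+\infty$.

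The main obstacle I anticipate is establishing attainment of the sup $\sup_x(\scal{x}{u}-f(x))$ for every $u\in U$, because $f$ is not assumed convex and maximizing sequences need not converge \emph{a priori}. This is precisely where essential smoothness of $f^*$ is used in an irreducible way: the gradient blow-up $\|\nabla f^*(v)\|\to\infty$ as $v\to\bd\dom f^*$ must be translated into a coercivity estimate ruling out escape of maximizing sequences to infinity. This coercive ingredient is the finite-dimensional heart of Soloviov's argument in \cite{Sol} and is the step that most plausibly fails in infinite dimensions.
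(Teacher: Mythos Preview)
The paper does not prove this statement; it is recorded as a Fact with a citation to \cite{Sol}, so there is no proof in the paper to compare against.

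Your outline is essentially correct, but you misdiagnose where the difficulty lies and where essential smoothness enters. Attainment of the supremum $\sup_x(\scal{x}{u}-f(x))$ for $u\in U=\inte\dom f^*$ does \emph{not} require essential smoothness at all: since $f^*$ is convex and finite on a neighborhood of $u$, it is bounded, say by $M$, on some closed ball $\overline{B}(u,\epsilon)$; for every $x\neq 0$ the choice $v=u+\epsilon x/\|x\|\in\overline{B}(u,\epsilon)$ gives $\scal{x}{u}-f(x)=\scal{x}{v}-f(x)-\epsilon\|x\|\leq f^*(v)-\epsilon\|x\|\leq M-\epsilon\|x\|$, which forces coercivity, and lower semicontinuity of $f$ then yields a maximizer. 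The place where essential smoothness is used irreducibly is exactly where you already invoke it: to guarantee that $\partial f^*(u)$ is the singleton $\{\nabla f^*(u)\}$, so that the maximizer $x^*$ is forced to coincide with $x_u$.

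You also overlook that the hypothesis reads $f\colon\RR^n\to\RR$, i.e., $f$ is everywhere real-valued. Hence $f^{**}\leq f<+\infty$ on all of $\RR^n$, so $f^{**}$ is a finite convex function and $\partial f^{**}(x_0)\neq\varnothing$ for every $x_0\in\RR^n$. Picking any $u\in\partial f^{**}(x_0)$ gives $x_0\in\partial f^*(u)$, and essential smoothness forces $u\in\dom\partial f^*=\inte\dom f^*=U$ and $x_0=\nabla f^*(u)=x_u$. Thus every $x_0$ is already of the form $x_u$, the case $x_0\notin\dom f^{**}$ is vacuous, and no density or limit argument is needed. With these two clarifications your sketch becomes a complete proof.
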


\begin{theorem}\label{3-marginal smooth conjugate}
	Let $n\in\NN,\ N=3$ and $H=\RR^n$. Let $g:X_2\to\RX$ and $h:X_3\to\RX$ be proper, lower semicontinuous and convex functions. Suppose that $f=(g\oplus h)^c$ (in particular if $(f,g,h)$ is a $c$-conjugate triple) and that $f$ is essentially smooth. Let $\Gamma$ be the $c$-splitting set generated by $(f,g,h)$. Then assertions~\ref{t:split_max_c-mono-i}--\ref{t:split_max_c-mono-v} of Theorem~\ref{t:split_max_c-mono} hold and $\Gamma$ is maximally $c$-monotone.
\end{theorem}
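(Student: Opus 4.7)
The plan is to verify assertion~\ref{t:split_max_c-mono-i} of Theorem~\ref{t:split_max_c-mono} with $i_0=1$; the remaining conclusions, including the maximal $c$-monotonicity of $\Gamma$, then follow directly from Theorem~\ref{t:split_max_c-mono} and its conclusion~\ref{t:split_max_c-mono-A}. First I would observe that $f$ is automatically proper, lower semicontinuous, and convex: for each fixed $(x_2,x_3)$ the map $x_1\mapsto c(x_1,x_2,x_3)-g(x_2)-h(x_3)$ is affine in $x_1$, so $f=(g\oplus h)^c$ is a supremum of affine functions, hence convex and lsc; properness follows because $\dom g,\dom h\neq\varnothing$ forces $f>-\infty$ pointwise, while essential smoothness forces $\inte\dom f\neq\varnothing$.

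Because $f$ is convex, proper, lsc, and essentially smooth on $\RR^n$, one has $\dom\partial f=\inte\dom f$ with $\partial f(x_1)=\{\nabla f(x_1)\}$, and $\partial f$ is maximally monotone. From~\eqref{e:split_subdiff ii}, $\gra A_1\subseteq\gra\partial f$, and therefore $\dom A_1\subseteq\inte\dom f$ with $A_1(x_1)\subseteq\{\nabla f(x_1)\}$ on $\dom A_1$. Thus, to conclude $A_1=\partial f$ (and hence that $A_1$ is maximally monotone), it suffices to prove the reverse inclusion $\inte\dom f\subseteq\dom A_1$; equivalently, for every $x_1\in\inte\dom f$, to produce $(x_2,x_3)\in X_2\times X_3$ with $(x_1,x_2,x_3)\in\Gamma$, that is, a pair attaining the supremum
\begin{equation*}
f(x_1)=\sup_{(x_2,x_3)\in X_2\times X_3}\big[c(x_1,x_2,x_3)-g(x_2)-h(x_3)\big].
\end{equation*}

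The core of the proof, and the main obstacle, is this attainment step. Fix $x_1\in\inte\dom f$ and let $(x_2^k,x_3^k)$ be a maximizing sequence. The first boundedness claim is that $\{x_2^k+x_3^k\}$ is bounded: otherwise, by passing to a subsequence, $y_k:=(x_2^k+x_3^k)/\|x_2^k+x_3^k\|\to y$ (using compactness of the unit sphere of $\RR^n$), and choosing $t>0$ small enough that $x_1+ty\in\inte\dom f$, the affinity of $c$ in $x_1$ with slope $x_2+x_3$ yields
\begin{equation*}
f(x_1+ty)\ \geq\ \big[c(x_1,x_2^k,x_3^k)-g(x_2^k)-h(x_3^k)\big]+t\|x_2^k+x_3^k\|\langle y,y_k\rangle;
\end{equation*}
the bracketed term converges to $f(x_1)\in\RR$ while $t\|x_2^k+x_3^k\|\langle y,y_k\rangle\to+\infty$ (since $\langle y,y_k\rangle\to 1$), forcing $f(x_1+ty)=+\infty$, a contradiction. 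The second claim is that each of $\|x_2^k\|,\|x_3^k\|$ is bounded: if, say, $\|x_2^k\|\to\infty$, then $\|x_3^k\|\to\infty$ as well (since $\|x_2^k+x_3^k\|$ is bounded), and the identity $\langle x_2^k,x_3^k\rangle=\tfrac12\|x_2^k+x_3^k\|^2-\tfrac12\|x_2^k\|^2-\tfrac12\|x_3^k\|^2$ drives $\langle x_2^k,x_3^k\rangle\to-\infty$ at rate $\|x_2^k\|^2+\|x_3^k\|^2$; however, affine minorants of $g,h$ (available because both are proper lsc convex) only allow $-g(x_2^k)-h(x_3^k)$ to grow linearly in $\|x_2^k\|+\|x_3^k\|$. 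Consequently $c(x_1,x_2^k,x_3^k)-g(x_2^k)-h(x_3^k)\to-\infty$, contradicting maximization toward the finite value $f(x_1)$. Extracting a convergent subsequence in $\RR^n\times\RR^n$ and using continuity of $c$ together with lower semicontinuity of $g,h$ then produces a maximizer $(x_2^{*},x_3^{*})$.

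With attainment in hand, $\inte\dom f=\dom A_1$ and single-valuedness of $\partial f$ on this set forces $A_1=\partial f$, which is maximally monotone. Theorem~\ref{t:split_max_c-mono} then delivers assertions~\ref{t:split_max_c-mono-i}--\ref{t:split_max_c-mono-v} and, via conclusion~\ref{t:split_max_c-mono-A}, the maximal $c$-monotonicity of $\Gamma$. The decisive features are the quadratic cross term $\langle x_2,x_3\rangle$ in $c$, which supplies hidden coercivity in $\|x_2\|,\|x_3\|$ once $\|x_2+x_3\|$ is controlled, and the sequential compactness of $\RR^n$; in infinite dimensions the same strategy would require additional coercivity of $g$ or $h$.
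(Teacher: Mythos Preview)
Your argument is correct, and it takes a genuinely different route from the paper's proof. The paper establishes assertion~\ref{t:split_max_c-mono-v} of Theorem~\ref{t:split_max_c-mono} directly: it computes
\[
(f+q)(x)=\sup_{y}\big(h^*(x+y)+q(x+y)-(g+q)(y)\big)=(e_h-e_{g^*})^*(x)=(q-e_{g^*}-e_{h^*})^*(x),
\]
the middle step being Toland's duality (Fact~\ref{Toland}); then, since $f+q$ is essentially smooth, Fact~\ref{Solov} forces $q-e_{g^*}-e_{h^*}$ to be convex, whence $e_{f^*}=(f+q)^*=(q-e_{g^*}-e_{h^*})^{**}=q-e_{g^*}-e_{h^*}$.

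By contrast, you aim at assertion~\ref{t:split_max_c-mono-ii} (equivalently~\ref{t:split_max_c-mono-i}) via an attainment argument: you exploit that essential smoothness pins down $\dom\partial f=\inte\dom f$ and then show, by a two-stage boundedness argument using the quadratic cross term $\langle x_2,x_3\rangle$, that every maximizing sequence for the supremum defining $f(x_1)$ has a convergent subsequence whose limit is a maximizer. This is more hands-on but entirely self-contained: it avoids both Toland's formula and Soloviov's convexity criterion, which are the two external ingredients in the paper's proof. Both approaches are tied to $H=\RR^n$ --- yours through sequential compactness, the paper's through Fact~\ref{Solov} --- so neither extends immediately to infinite dimensions, as you correctly note.
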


\begin{proof}
	Since $f=(g\oplus h)^c$ and $\dom(g+q)^*=\dom(e_{g^*})=\RR^n$, then by employing Fact~\ref{Toland} in~\eqref{f(x+y) conjugates} and then Moreau's decomposition in~\eqref{Moreau's decomp for 3 conjugates} we see that
	\begin{align}
	(f+q)(x)&=
	\sup_{y,z\in\RR^n} \big(c(x,y,z)-g(y)-h(z)+q(x)\big)\nonumber\\
	&=\sup_{y,z\in\RR^n} \big(\scal{x}{y}+\scal{y}{z}+\scal{z}{x}+q(x)-g(y)-h(z)\big)\nonumber\\
	&=\sup_{y\in\RR^n}\big( \scal{x}{y}+h^*(x+y)+q(x)-g(y)\big)\nonumber\\
	&= \sup_{y\in\RR^n}\big( h^*(x+y)+q(x+y)-(g(y)+q(y))\big)\nonumber\\
	&=\big((h^*+q)^*-(g+q)^*\big)^*(x)\label{f(x+y) conjugates}\\
	&=(e_h-e_{g^*})^*(x)=(q-e_{g^*}-e_{h^*})^*(x).\label{Moreau's decomp for 3 conjugates}
	\end{align}
	Since $f+q$ is essentially smooth, Fact~\ref{Solov} implies that $q-e_{g^*}-e_{h^*}$ is convex. Consequently,
	$$
	e_{f^*}=(f+q)^*=(q-e_{g^*}-e_{h^*})^{**}=q-e_{g^*}-e_{h^*},
	$$
	that is, $e_{f^*}+e_{g^*}+e_{h^*}=q$.
\end{proof}

\begin{remark}
In our discussion in the last paragraph of Section~\ref{s:multi-mar-Minty} we pointed out that in the partition of the identity in~Theorem~\ref{t:MMMM}\ref{t:MMMM-iv} any partial sum of the firmly nonexpansive mappings is again firmly nonexpansive and, furthermore, that general partitions of the identity into firmly nonexpansive mappings partial sums may fail to be firmly nonexpansive. Thus, in the context of $c$-splitting sets a natural question is: Given a partition of the identity into proximal mappings, are partial sums also proximal mappings? Unlike general firmly nonexpansive mappings, a positive answer to this question is provided by~\cite[Theorem~4.2]{BBW1}. 
\end{remark}

\section{Examples, observations and remarks}
\label{s:ex}

We now apply our results in order to determine maximality of $c$-monotone
sets. Given a multi-marginal $c$-cyclically monotone set $\Gamma\subseteq X$,
the problem of constructing a $c$-splitting tuple is, in general, nontrivial.
Nevertheless, constructions which are independent of maximality and uniqueness considerations are available for some classes of
$c$-cyclically monotone sets (for example, see \cite{BBW2} for the case $N \geq 3$). We also note that $c$-splitting tuples can be constructed via~\eqref{e:split_subdiff ii} if it is known, in addition, that the antiderivatives $f_i$ are unique up to additive constants, as guaranteed by Theorem~\ref{t:split_max_c-mono}. Now, suppose that a $c$-splitting tuple is already given. The computation and
classification of the $c$-splitting tuple as being a $c$-conjugate tuple
were, thus far, nontrivial. We employ our results for such classifications in
the following examples. For these cases, we are able to conclude
$c$-conjugacy without additional nontrivial computations of multi-marginal
conjugates. In addition, we demonstrate finer aspects of multi-marginal
maximal monotonicity.

\begin{example}\label{ex:quadratics}
 For each $1\leq i\leq N$, set
	$X_i=\RR^d$ and let $Q_i\in\RR^{d\times d}$ be symmetric,
	positive definite, and pairwise commuting. 
	Set 
	\begin{equation*}
	\Gamma=\big\{(Q_1v,\ldots,Q_Nv)\ \big|\ v\in\RR^d\big\}.
	\end{equation*}
	For each $1\leq i\leq M$, define $M_i\in\RR^{d\times d}$ by 
	\begin{equation*}
	M_i=\Big(\sum_{k\neq i}Q_k\Big)Q_i^{-1}.
	\end{equation*}
	In~\cite[Example~3.4]{BBW2}, it was established that
	\begin{equation*}
	\forall (x_1,\ldots,x_N)\in X,\quad
	c(x_1,\ldots,x_N)=\sum_{1\leq i< j\leq N}\scal{x_i}{x_j}\leq\sum_{1\leq i\leq N}q_{M_i}(x_i),
	\end{equation*}
	where $q_{M_i}(x)=\frac{1}{2}\scal{x}{M_i x}$,
	and equality holds if and only if $(x_1,\ldots,x_N)\in\Gamma$.
	
	Thus, we conclude that $\Gamma$ is the $c$-splitting set generated by the tuple $(q_{M_1},\ldots,q_{M_N})$, and that $A_i=M_i=\nabla q_{M_i}$ for each $1\leq i\leq N$. Consequently, Theorem~\ref{t:split_max_c-mono} implies that $(q_{M_1},\ldots,q_{M_N})$ is a $c$-conjugate $c$-splitting tuple of $\Gamma$, and that $\Gamma$ is maximally $c$-monotone.
	
	The maximal $c$-monotonicity of $\Gamma$ is also implied by Theorem~\ref{continuous monotone is maximal} via continuity of a parametrization, say,
		\begin{equation*}
		\Gamma=\big\{(v,Q_2Q_1^{-1}v\ldots,Q_NQ_1^{-1}v)\ \big|\ v\in\RR^d\big\}.
		\end{equation*}
\end{example}

As a simple application of Example~\ref{ex:quadratics}, we now generalize the well-known classical fact that the only conjugate pair of the form $(f,f)$ is $(f,f)=(q,q)$ and that in this case the generated splitting set is the graph of the identity mapping.

\begin{corollary}[self $c$-conjugate tuple]
	The only $c$-conjugate tuple of the form $(f,\ldots,f)$ is 
	\begin{equation*}
	(f,\ldots,f)=(N-1)(q,\ldots,q).
	\end{equation*}
	In this case, the generated $c$-splitting set is $\Gamma=\Delta$.
\end{corollary}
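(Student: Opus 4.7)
The plan is to deduce $f=(N-1)q$ by proving matching lower and upper bounds from the $c$-conjugacy hypothesis, and then to compute the generated $c$-splitting set by a direct polarization argument. The main technical ingredient is the identity
\begin{equation*}
c(x_1,\ldots,x_N)=\sum_{i=1}^N (N-1)q(x_i)-\tfrac12\sum_{1\leq i<j\leq N}\|x_i-x_j\|^2,
\end{equation*}
which is obtained by expanding $\|x_i-x_j\|^2$ and summing; I would record it first and use it throughout.

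For the lower bound, I would start from $c$-conjugacy at position $1$,
\begin{equation*}
f(x)=\sup_{x_2,\ldots,x_N\in H}\Big(c(x,x_2,\ldots,x_N)-\sum_{i=2}^N f(x_i)\Big),
\end{equation*}
and exploit the symmetry of $c$ and of the tuple by feeding in the single choice $x_2=\cdots=x_N=x$. This yields $f(x)\geq c(x,\ldots,x)-(N-1)f(x)=\tfrac{N(N-1)}{2}\|x\|^2-(N-1)f(x)$, whence $N f(x)\geq \tfrac{N(N-1)}{2}\|x\|^2$ and therefore $f\geq (N-1)q$.

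For the upper bound, I would first verify that $\big((N-1)q,\ldots,(N-1)q\big)$ is itself a $c$-conjugate tuple. Using the polarization identity above,
\begin{equation*}
\Big(\bigoplus_{i\neq i_0}(N-1)q\Big)^c(x_{i_0})=(N-1)q(x_{i_0})+\sup_{x_i,\ i\neq i_0}\Big(-\tfrac12\sum_{i<j}\|x_i-x_j\|^2\Big)=(N-1)q(x_{i_0}),
\end{equation*}
since the inner supremum is $0$, attained at $x_i=x_{i_0}$ for all $i\neq i_0$. Then, inserting the lower bound $f\geq (N-1)q$ (so $-f\leq -(N-1)q$) inside the $c$-conjugacy of $f$,
\begin{equation*}
f(x)\leq \sup_{x_2,\ldots,x_N}\Big(c(x,x_2,\ldots,x_N)-\sum_{i=2}^N(N-1)q(x_i)\Big)=(N-1)q(x),
\end{equation*}
yields $f\leq (N-1)q$. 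Combining the two inequalities gives $f=(N-1)q$.

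Finally, the generated $c$-splitting set $\Gamma$ consists of those tuples where equality $c(x_1,\ldots,x_N)=\sum_{i=1}^N (N-1)q(x_i)$ holds. By the polarization identity, this equality is equivalent to $\sum_{i<j}\|x_i-x_j\|^2=0$, i.e.\ to $x_1=\cdots=x_N$, which is exactly $\Gamma=\Delta$. I do not anticipate a serious obstacle; the only mildly delicate point is recognizing that the upper bound comes for free once one knows that $\big((N-1)q,\ldots,(N-1)q\big)$ is $c$-conjugate, and that this latter fact in turn follows cleanly from the polarization identity (or, in the finite-dimensional case, as a direct specialization of Example~\ref{ex:quadratics} with $Q_1=\cdots=Q_N=\Id$).
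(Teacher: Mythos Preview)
Your proposal is correct and follows essentially the same approach as the paper: the lower bound $f\geq (N-1)q$ via the diagonal choice $x_i=x$, then the upper bound by antitonicity of the $c$-conjugate applied to that lower bound, using that $\big(\bigoplus_{i\neq i_0}(N-1)q\big)^c=(N-1)q$. The only cosmetic difference is that the paper obtains both the $c$-conjugacy of $\big((N-1)q,\ldots,(N-1)q\big)$ and the identification $\Gamma=\Delta$ by specializing Example~\ref{ex:quadratics} with $Q_i=\Id$, whereas you derive these directly from the polarization identity; you already note this alternative yourself.
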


\begin{proof}
 In the settings of Example~\ref{ex:quadratics} we let $Q_i=\Id$ for each
 $1\leq i\leq N$. Then $\Gamma=\Delta$ and $q_{M_i}=(N-1)q$ for each $1\leq
 i\leq N$. We conclude that $(N-1)(q,\ldots,q)$ is a $c$-conjugate
 $c$-splitting tuple and generates the $c$-splitting set $\Delta$. We now
 prove that it is the only $c$-conjugate tuple of this form. Let
 $(f,\ldots,f)$ be a $c$-conjugate tuple. Then for $1\leq i_0\leq N$ and for
 $x_{i_0}\in X_{i_0}$,
	\begin{equation}\label{self conjugates}
	f(x_{i_0})=\sup_{i\neq i_0,\ x_i\in H}\Big(c(x_1,\ldots,x_{i_0},\ldots,x_N)-\sum_{i\neq i_0}f(x_i)\Big).
	\end{equation}
	By letting $x_i=x_{i_0}$ for every $i$ in the supremum in~\eqref{self conjugates} we see that
	$$
	f(x_{i_0})\geq c(x_{i_0},\ldots,x_{i_0})-(N-1)f(x_{i_0})\ \ \ \ \ \ \ \Rightarrow\ \ \ \ \ \ \ Nf\geq N(N-1)q\ \ \ \ \ \Rightarrow\ \ \ \ \ \ f\geq (N-1)q.
	$$
	Consequently, 
	$$
	f=\Big(\bigoplus_{i\neq i_0} f\Big)^c\leq \Big(\bigoplus_{i\neq i_0} q\Big)^c=(N-1)q.
	$$
\end{proof}	

A similar type of construction to the one of Example~\ref{ex:quadratics},
however, a nonlinear one, is available when the marginals are one-dimensional.

\begin{example}
	For each $1\leq i\leq N$, let $\alpha_i:\RR\to\RR$ be a continuous, 
	strictly increasing and surjective function with $\alpha_i(0)=0$. Let $\Gamma$ be the curve in $\RR^N$ defined by
	$$
	\Gamma=\Big\{\big(\alpha_1(t),\ldots,\alpha_N(t)\big)\ \Big|\ t\in\RR\Big\}
	$$
	and for each $1\leq i\leq N$, let
	\begin{equation}\label{curves}
	f_i(x_i)=\int_0^{x_i} \bigg(\sum_{k\neq i} \alpha_k\big(\alpha^{-1}_i(t)\big)\bigg)dt.
	\end{equation}
In~\cite[Example 4.3]{BBW2}, it was established that
	\begin{equation}\label{e:curve_ineq}
	\sum_{1\leq i<j\leq N} x_ix_j \leq \sum_{i=1}^N\int_0^{x_i} \bigg(\sum_{k\neq i} \alpha_k\big(\alpha^{-1}_i(t)\big)\bigg)dt
	\ \ \ \ \ \ \ \ \ \ \ \ \ \forall (x_1,\ldots,x_N)\in\RR^N
	\end{equation}
and that equality in~\eqref{e:curve_ineq} holds if and only if $x_j=\alpha_j\big(\alpha_i^{-1}(x_i)\big)$ for every $1\leq i<j\leq N$, namely, if $(x_1,\ldots,x_N)\in\Gamma$. We now conclude that $\Gamma$ is the $c$-splitting set generated by the tuple $(f_1,\ldots,f_N)$ and that for each $1\leq i\leq N$,
	\begin{equation*}
	A_i=\nabla f_i=\sum_{k\neq i} \alpha_k\circ\alpha_i^{-1}.
	\end{equation*}
Consequently, Theorem~\ref{t:split_max_c-mono} implies that $(f_1,\ldots,f_n)$ is a $c$-conjugate $c$-splitting tuple of the maximally $c$-monotone curve $\Gamma$. Similar to Example~\ref{ex:quadratics}, the maximal $c$-monotonicity of $\Gamma$ can also be deduced via continuity.
\end{example}

A linear example of a different type, where none of the two marginal
projections of $\Gamma$ is monotone, but where, however, $\Gamma$ is $c$-cyclically
monotone, is available for $N=3$ and 2-dimensional marginals.

\begin{example}\label{ex:gamma ij not mono}
	Suppose that $N=3$ and that $X_1 = X_2=X_3=\RR^2$.  We set
	$$
	M_1=2\begin{pmatrix}
	1 & 0\\0 & 0
	\end{pmatrix},\ \ \ \
	M_2=2	\begin{pmatrix}
	1 & 0\\0 & 1
	\end{pmatrix},\ \ \ \ 
	M_3=\frac{1}{7}	\begin{pmatrix}
	8 & 3\\3 & 2
	\end{pmatrix}
	$$
	and
	$$
	\Delta_2=\big\{(a,a)\ \big|\ a\in\RR\big\} \subseteq\RR^2.
	$$
	Set 
	\begin{equation*}
	f_1=\iota_{\RR\times\{0\}}+q_{M_1},\qquad f_2=\iota_{\Delta_2}+q_{M_2}=\iota_{\Delta_2}+2q,\qquad
	\text{and}\qquad f_3=q_{M_3}.
	\end{equation*}
	
	\ \\
	Furthermore, set $v_1=\big((0,0),(-1,-1),(1,-5)\big)$, $v_2=\big((1,0),(2,2),(0,7)\big)$ and 
	$$
	\Gamma=\spa\{v_1,v_2\}=\Big\{\big((s,0),(2s-t,2s-t),(t,7s-5t)\big)\Big|\ s,t\in\RR\Big\}.
	$$
	It was established in~\cite[Example 3.5]{BBW2} that
	$$
	\scal{x_1}{x_2}+\scal{x_2}{x_3}+\scal{x_3}{x_1}\leq f_1(x_1)+f_2(x_2)+f_3(x_3)\ \ \ \ \ \ \ \ \ \text{for all} \ \ \ \ (x_1,x_2,x_3)\in\big(\RR^2\big)^3
	$$
	with equality if and only if $(x_1,x_2,x_3)\in\Gamma$, namely, $\Gamma$ is the $c$-splitting set generated by the tuple $(f_1,f_2,f_3)$ and that none of the two marginal projections $\Gamma_{1,2},\ \Gamma_{1,3}$ and $\Gamma_{2,3}$ of $\Gamma$, is monotone. 
	
We observe that the matrix representation of the mapping
	$$
	(t,7s-5t)\ \mapsto\ (s,0)+(2s-t,2s-t)\ \ \ \ \ \ \ s,t\in\RR
	$$ 
	is $M_3$. Consequently, we see that $A_3=M_3=\nabla f_3$. Thus, by employing Theorem~\ref{t:split_max_c-mono} we conclude that $(f_1,f_2,f_3)$ is a $c$-conjugate $c$-splitting tuple of the maximally $c$-monotone subspace $\Gamma$ of $\big(\RR^2\big)^3$.
\end{example}	

In all of our examples thus far, the set $\Gamma$ was a maximally $c$-monotone
$c$-splitting set. We now present maximally $c$-monotone sets which are not
$c$-splitting sets. To this end, we note the following simple fact: Suppose
that the set $\Gamma\subseteq X$ is $n$-$c$-monotone, then for each $1\leq
i_0\leq N$ the mapping $A_{i_0}:H\rightrightarrows H$ is $n$-monotone.
Indeed, let $\Gamma$ be $n$-$c$-monotone and assume, without the loss of
generality, that $i_0=1$. Let
$(x^1_1,\dots,x_N^1),\dots,(x_1^n,\dots,x_N^n)\in\Gamma$ and $\sigma\in S_n$. Then a straightforward computation implies that the inequality
$$
\sum_{j=1}^n c(x_1^j,x_2^{\sigma(j)},\dots,x_N^{\sigma(j)})\leq \sum_{j=1}^n c(x_1^{j},\dots,x_N^{j})
$$
leads to the inequality
$$
\sum_{j=1}^n\bigg\langle x_1^j,\sum_{i=2}^N x_i^{\sigma(j)}\bigg\rangle\leq \sum_{j=1}^n\bigg\langle x_1^j,\sum_{i=2}^N x_i^j\bigg\rangle.
$$
Thus, we see that if $\Gamma$ is $n$-$c$-monotone, then $A_1$ is $n$-monotone. To sum up, 
\begin{quote}
if for some $1\leq i_0\leq N$ the mapping $A_{i_0}$ is not cyclically monotone, then the set $\Gamma$ is not a $c$-splitting set.
\end{quote}
Indeed, otherwise, $\Gamma$ would have been $c$-cyclically monotone (as we recollected after Definition~\ref{monotonicity definitions}) and, by the above argument, for all $1\leq i_0\leq N$ the mapping $A_{i_o}$ would have been cyclically monotone.

We now address a trivial embedding of all classical maximally monotone operators in the multi-marginal framework. In particular, we obtain maximally $c$-monotone mappings which are not $c$-cyclically monotone. 

\begin{example}\label{trivial embedding}
Let $A:H\rightrightarrows H$ be a maximally monotone mapping. We set $\Gamma\subseteq X$ by
$$
\Gamma=\menge{(x_1,x_2,0,\ldots,0)}{x_2\in Ax_1}.
$$
Then $\Gamma$ is $c$-monotone and we see that $A_1=A$ is maximally monotone. Consequently, by invoking Theorem~\ref{t:MMMM}~\ref{t:MMMM-ii} we conclude that $\Gamma$ is maximally $c$-monotone. In addition, we see that $A$ is $n$-monotone if and only if $\Gamma$ is $n$-$c$-monotone.
Therefore, if $A$ is not $n$-monotone for some $n\geq 3$, then $\Gamma$ is not $n$-$c$-monotone.
Furthermore, since the $n$-$c$-monotonicity of a set is invariant under
shifts, the set $\Gamma=\menge{(x_1,x_2,\rho_3,\ldots,\rho_N)}{x_2\in Ax_1}$
is also maximally monotone for any constant vectors $\rho_3,\ldots,\rho_N\in H$.
\end{example}

 Our next example of a maximally $c$-monotone set which is not a $c$-splitting set does not follow from an embedding of the type in Example~\ref{trivial embedding}.

\begin{example}\label{c-mono not cyclically mono}
	
	Set $N=3$ and for each $1\leq i\leq 3$ set $X_i=\RR^2$. 
	Let $R_\theta$ denote the counterclockwise rotation by the angle $\theta$ in $\RR^2$. Let the set $\Gamma\subseteq X=\big(\RR^2\big)^3$ be defined by
	\begin{equation}\label{non cyclically gamma}
\Gamma=\Big\{\Big(x,\ \tfrac{\sqrt{3}}{2}R_{-\pi/2}x,\ \tfrac{\sqrt{3}}{2}R_{-\pi/2}x\Big) \,\Big|\,x\in\RR^2\Big\}.
\end{equation}
It follows that 
\begin{equation*}
\gr A_1=\Big\{(x,\sqrt{3}R_{-\pi/2}x)\,\Big|\,x\in\RR^2\Big\}
\quad\implies\quad
A_1=\sqrt{3}R_{-\pi/2}.
\end{equation*}
Since $\Gamma=\Big\{\Big(\tfrac{2}{\sqrt{3}}R_{\pi/2}x,x,x\Big) \,\Big|\,x\in\RR^2\Big\}$, we have
\begin{equation*}
\gr A_2=\gr A_3=
\Big\{\Big(x,x+\frac{2}{\sqrt{3}}R_{\pi/2}x\Big)\,\Big|\,x\in\RR^2\Big\}
\quad\implies\quad
A_2=A_3=\sqrt{\frac{7}{3}}R_{\arctan(2/\sqrt{3})}.
\end{equation*}
We see that $A_1$, $A_2$, and $A_3$ are maximally monotone. Consequently, for each $\varnothing\neq K\subsetneq\{1,2,3\}$, the
mapping $A_{K}$ is maximally monotone and it now follows from
Theorem~\ref{t:MMMM} that $\Gamma$ is maximally $c$-monotone in $X$.
Furthermore, since $A_1$ is not $3$-$c$-cyclically monotone, it is not
$c$-cyclically monotone and, consequently, $\Gamma$ is not a $c$-splitting
set. By a straightforward computation, it follows that
\begin{equation*}
J_{A_1}=\frac{1}{2}R_{\pi/3},\ \ 
J_{A_2}=J_{A_3}=\frac{\sqrt{3}}{4}R_{-\pi/6}\ \ \ \ \ \ \text{and}\ \ \ \ \ J_{A_1}+J_{A_2}+J_{A_3}=\Id.
\end{equation*}
Finally, from~\eqref{non cyclically gamma} it is easy to see that $\Gamma_{i,j}$ is monotone for all $1\leq i<j\leq 3$.
\end{example}

We see that in the case $N=3$ the set $\Gamma$ is $c$-monotone if and only if
the mappings $A_1, A_2$ and $A_3$ are monotone. In the following example we
demonstrate that the monotonicity of all of the $A_i$'s no longer implies the
$c$-monotonicity of $\Gamma$ in the case when $N\geq 4$.

\begin{example}\label{e:failed partial sum} 
	In~\cite[Lemma~4.2 and Example~4.3]{BBW1} it was established that: 
	In $X=\RR^2$, 
	let $n\in\{2,3,\ldots\}$, 
	let $\theta \in
	\;\big]\negthinspace\arccos(1/\sqrt{2}),\arccos(1/\sqrt{2n})\big]$,
	set $\alpha=1/(2n\cos(\theta))$, and 
	denote by $R_\theta$ the counterclockwise rotator by $\theta$.
	Then the following hold:
	\begin{enumerate}
		\item $\alpha R_\theta$ and $\alpha R_{-\theta}$ are firmly
		nonexpansive.
		\item\label{ex5.7ii} $n\alpha R_\theta$ and $n\alpha R_{-\theta}$ are not
		firmly nonexpansive.
		\item $n\alpha R_\theta + n\alpha R_{-\theta} = \Id$.
	\end{enumerate}
We employ these facts to construct a set $\Gamma$ as follows. We set $N=2n$ and
$$
T_i=\begin{cases}
\alpha R_\theta, & 1\leq i\leq n;\\
\alpha R_{-\theta}, & n+1\leq i\leq 2n.
\end{cases} 
$$
Define
$$
\Gamma=\menge{(T_1 x,\ldots,T_{2n} x)}{x\in\RR^2}\subseteq X={(\RR^2)}^{2n}.
$$
It then follows that for each $1\leq i\leq N$, the mapping $J_{A_i}=T_i$ is
firmly nonexpansive with full domain. We conclude that the set $\Gamma$
possesses the following properties:
\begin{enumerate}[resume]
	\item for each $1\leq i\leq N$, the mapping $A_i$ is maximally monotone,
	\item $J_{A_1}+\cdots+J_{A_N}=\Id$.
\end{enumerate}
However, due to \ref{ex5.7ii}, the mappings
\begin{equation*}
J_{A_{\{1,\ldots,n\}}}=\sum_{i=1}^n J_{A_i}=\sum_{i=1}^{n}T_i=n\alpha R_{\theta},
\quad\text{and similarly}\quad
J_{A_{\{n+1,\ldots,2n\}}}=n\alpha R_{-\theta}
\end{equation*}
are not firmly nonexpansive, equivalently, $A_{\{1,\ldots,n\}}=R_{-2\theta}$
and $A_{\{n+1,\ldots,2n\}}=R_{2\theta}$ are not monotone. Consequently, by
employing Lemma~\ref{l:c-mono_iff_mono} we conclude that despite the fact
that $\Gamma$ possesses properties (iv) and (v), it is not a $c$-monotone
set.
\end{example}

\begin{remark}

In~\cite{BBW2} the two marginal projections $\Gamma_{i,j}$ of a set
$\Gamma\subseteq X$ were employed, it was established that if the
$\Gamma_{i,j}$'s are cyclically monotone, then $\Gamma$ is $c$-cyclically
monotone and an explicit construction of a $c$-splitting tuple is provided.
However, it was also established that this is a sufficient condition for
$c$-cyclic monotonicity of $\Gamma$ but not a necessary one, in general, as
can be seen in Example~\ref{ex:gamma ij not mono} where we provide a
maximally $c$-cyclically monotone set such that all of its two-marginal projections are
not monotone. In the one dimensional case (i.e., $X_i=\RR$ for each $1\leq
i\leq N$), it was established that $\Gamma$ is $c$-monotone if and only if
all of its two marginal projections $\Gamma_{i,j}$ are monotone. With the
exception of Example~\ref{ex:gamma ij not mono}, in all of our examples of
$c$-monotone sets in this section the set $\Gamma$ had monotone
two-marginal projections $\Gamma_{i,j}$. Thus, a natural question is: 
\emph{How does
the monotonicity and maximal monotonicity of the two-marginal projections
$\Gamma_{i,j}$ relate to the $c$-monotonicity and maximal $c$-monotonicity
of $\Gamma$?} 

\begin{proposition}\label{p:mono two projections}
	Let $X_i=\RR^d$ for each $1\leq i\leq N$. Let $\Gamma\subseteq X$ be a set. Suppose that for each $1\leq i< j\leq N$ the set $\Gamma_{i,j}$ is monotone. Then $\Gamma$ is $c$-monotone.
\end{proposition}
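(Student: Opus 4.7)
The plan is to invoke Lemma~\ref{l:c-mono_iff_mono} and thereby reduce the claim to showing that, for every nonempty proper subset $K$ of $I=\{1,\ldots,N\}$, the mapping $A_K$ associated with $\Gamma$ is monotone. Unwinding the definition of $A_K$ in \eqref{e:monotone_condition}, this amounts to verifying, for every pair of points $x=(x_1,\ldots,x_N),\ y=(y_1,\ldots,y_N)$ in $\Gamma$, the single scalar inequality
\begin{equation*}
\Big\langle \sum_{i\in K}(x_i-y_i),\ \sum_{j\in I\setminus K}(x_j-y_j)\Big\rangle\ \geq\ 0.
\end{equation*}

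First I would expand this inner product bilinearly as the double sum $\sum_{i\in K}\sum_{j\in I\setminus K}\langle x_i-y_i,\,x_j-y_j\rangle$. The key observation is that each index pair $(i,j)$ appearing here satisfies $i\neq j$, so (up to reordering, which is harmless by symmetry of the inner product) it falls under the monotonicity hypothesis on some two-marginal projection $\Gamma_{\min(i,j),\max(i,j)}$. Since $(x_i,x_j)$ and $(y_i,y_j)$ both lie in $\Gamma_{\min(i,j),\max(i,j)}$ by definition of the two-marginal projections, monotonicity of that projection yields $\langle x_i-y_i,\,x_j-y_j\rangle\geq 0$. Summing these nonnegative contributions over all $i\in K$ and $j\in I\setminus K$ gives the required inequality; since $K$ was arbitrary, Lemma~\ref{l:c-mono_iff_mono} then delivers the $c$-monotonicity of $\Gamma$.

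I do not anticipate a real obstacle here: the structural fact driving the argument is simply that the bilinear form defining $A_K$ decomposes cleanly into a sum over exactly those pairs $(i,j)$ with $i\in K$ and $j\notin K$, which are precisely the pairs controlled by the hypothesis. It is worth remarking that the finite-dimensional assumption $X_i=\RR^d$ plays no role in this particular argument, which works verbatim in any real Hilbert space; and that this is the easy direction, mirroring (and helping to explain) the earlier remark that in the one-dimensional case the monotonicity of all projections $\Gamma_{i,j}$ is actually equivalent to $c$-monotonicity of $\Gamma$, whereas Example~\ref{ex:gamma ij not mono} shows the converse fails in higher dimensions.
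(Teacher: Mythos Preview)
Your proposal is correct and follows essentially the same approach as the paper's proof: reduce via Lemma~\ref{l:c-mono_iff_mono} to the monotonicity of each $A_K$, expand the defining inner product bilinearly as $\sum_{i\in K,\,j\notin K}\scal{x_i-y_i}{x_j-y_j}$, and bound each summand below by $0$ using the monotonicity of the corresponding two-marginal projection. Your additional remarks on the irrelevance of the finite-dimensional hypothesis and on the failure of the converse are accurate side comments not present in the paper's proof.
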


\begin{proof}
The mapping $A_K$ is monotone if and only if for every $(x_1,\ldots,x_N),\ (y_1,\ldots,y_N)\in\Gamma$,
	$$
	0\leq\Scal{\sum_{i\in K}x_i-y_i}{\sum_{j\not\in K}x_j-y_j}.
	$$
	Since the right-hand side is equal to $\sum_{i\in K\atop j\not\in K}\scal{x_i-y_i}{x_j-y_j}$ and since, by the monotonicity of $\Gamma_{i,j}$, $0\leq\scal{x_i-y_i}{x_j-y_j}$, we see that $A_K$ is monotone.  
\end{proof}

To the best of our knowledge, the question whether the maximal monotonicity of the $\Gamma_{i,j}$'s implies the maximal $c$-monotonicity of $\Gamma$ is still open. 

Finally, we note that the maximal $c$-monotonicity of $\Gamma$ does not imply the maximal monotonicity of the $\Gamma_{i,j}$'s even when the $\Gamma_{i,j}$'s are monotone. Indeed, in Example~\ref{trivial
embedding}, we see that although $\Gamma$ is maximally $c$-monotone,
$\Gamma_{i,j}$ is a singleton for all $3\leq i<j\leq N$, thus $\Gamma_{i,j}$ is monotone but not maximally monotone. Even in the case
$N=3$, $\Gamma_{1,3}$ is a proper subset of the graph of the zero mapping
whenever $\Gamma$ is generated by a maximally monotone mapping $A$ without a
full domain. We conclude in this case that $\Gamma$ is maximally
$c$-monotone, however, $\Gamma_{1,3}$ is not maximally monotone.
\end{remark}

\section*{Acknowledgments}
We thank three anonymous referees for their kind and useful remarks.
Sedi Bartz was partially supported by a University of Massachusetts Lowell
startup grant. 
Heinz Bauschke and Xianfu Wang were partially supported by the Natural Sciences and
Engineering Research Council of Canada. Hung Phan was partially supported by
Autodesk, Inc.

\end{document}